\newcommand{\E}{\mathbb{E}}
\newcommand{\expvalb}[2]{\E_{#1}\left[ #2 \right]}
\newcommand{\inner}[1]{\left\langle #1 \right\rangle}
\newtheorem{thm}{Theorem}
\newtheorem{prop}{Proposition}
\newtheorem{lem}{Lemma}
\journal{-}
\begin{document}

\begin{frontmatter}

\title{A Bayesian Update Method for Exponential Family Projection Filters with Non-Conjugate Likelihoods}

\author[firstaddress,secondaddress]{Muhammad Fuady Emzir\corref{mycorrespondingauthor}}
\cortext[mycorrespondingauthor]{Corresponding author}
\ead{muhammad.emzir@kfupm.edu.sa}
\address[firstaddress]{Control and Instrumentation Engineering Department, King Fahd University of Petroleum and Minerals, Dhahran, Saudi Arabia}
\address[secondaddress]{Interdisciplinary Research Center of Smart Mobility and Logistics, King Fahd University of Petroleum and Minerals, Dhahran, Saudi Arabia}

\begin{abstract}
  The projection filter is one of the approximations to the solution of the optimal filtering problem. It approximates the filtering density by projecting the dynamics of the square-root filtering density onto the tangent space of the square-root parametric density manifold. While the projection filters for exponential and mixture families with continuous measurement processes have been well studied, the continuous-discrete projection filtering algorithm for non-conjugate priors has received less attention.
  In this paper, we introduce a simple Riemannian optimization method to be used for the Bayesian update step in the continuous-discrete projection filter for exponential families. Specifically, we show that the Bayesian update can be formulated as an optimization problem of $\alpha$-R\'enyi divergence, where the corresponding Riemannian gradient can be easily computed. We demonstrate the effectiveness of the proposed method via two highly non-Gaussian Bayesian update problems.
  
\end{abstract}

\begin{keyword}
Estimation \sep Filtering Theory \sep Kalman Filtering \sep Projection Filter

\end{keyword}

\end{frontmatter}

\section{Introduction}

The projection filter is one of the approximations for nonlinear filtering solutions \cite{hanzon1991,brigo1998,brigo1999}. The projection filter can be seen as a rigorous treatment of the assumed density filtering developed in the 60s \cite{kushner1967}. Recent developments in the projection filter via sparse-grid integration enable efficient implementation of the projection filter for multi-dimensional problems and have renewed interest in this topic; see \cite{emzir2023,emzir2023a,emzir2024c,emzir2024}. Unlike the case for continuous measurement processes, the continuous-discrete projection filter has not been well-studied. In the case of the exponential family manifold, constructing a projection filter algorithm for a discrete measurement process is challenging when the likelihood function is not conjugate to the chosen exponential family. This difficulty arises from the need to project the square root of the posterior density onto the square-root exponential family manifold. Unlike the scenario with a conjugate prior, this projection is non-trivial. In this paper, we propose a variational method to address the Bayesian update process in such cases.

To the best of the author's knowledge, the application of a variational method to address the Bayesian update step for the exponential family's projection filter with non-conjugate priors has not been previously proposed in the literature. However, equivalent applications of the variational method to nonlinear filtering problems exist. For instance, in \cite{darling2017a}, the Kullback--Leibler (KL) divergence is minimized via moment matching \cite{herbrich2005} in the Bayesian update step for the assumed Gaussian density filter; see also \cite{kulhavy1990,smidl2008,corenflos2023}. We argue that Riemannian optimization is more suitable for finding the closest parametric density to the actual posterior since the set of parametric densities can be regarded as a Riemannian manifold. It has been observed that Riemannian gradient descent offers better convergence compared to non-Riemannian gradient-based optimization methods in various information geometric problems; see \cite{amari1998}. Moreover, instead of employing KL divergence as the cost function, as is common practice in many variational-based filtering approaches, we opt for $\alpha$-R\'enyi divergence, which serves as a generalization of KL divergence \cite{li2016,saha2020}.

The contributions of this paper are twofold. First, we formulate the Bayesian update procedure for the continuous-discrete projection filter as a Riemannian optimization problem. We derive the Riemannian gradient expression of the $\alpha$-R\'enyi divergence function, which can be used to identify the closest parametric density to the posterior. The optimization method was not needed in continuous-continuous filtering problems, since there is no explicit Bayesian update, and the projection filter is applied directly to the Stratonovich--Kushner stochastic partial differential equation. The optimization problem is also not needed in the continuous-discrete case with a conjugate prior, as the update step is exact. We further show that under the Riemannian gradient descent parameter update, the set of points in the parameter space with vanishing gradients is globally asymptotically stable. Second, we show how to implement Riemannian gradient descent via sparse-grid quadrature using an adaptive bijection function. Specifically, we highlight the advantages of minimizing $\frac{1}{2}$-R\'enyi divergence over KL divergence in approximating two highly non-Gaussian posteriors in Bayesian update steps through two numerical examples. The proposed Bayesian update can then be implemented for the continuous-discrete projection filter on the exponential-family manifold, providing an efficient method to approximate the solution to optimal filtering problems.

The paper is organized as follows. Section \ref{sec:Notation} provides all necessary notations used in this paper. Section \ref{sec:Projection_Filter} introduces the continuous-discrete projection filter formulation for the exponential family manifold. Section \ref{sec:Bayesian_update} contains the main contributions of this paper, where the proposed variational method for the Bayesian update phase is introduced. Section \ref{sec:numerical_implementation} details the numerical implementation of the proposed Bayesian update. Section \ref{sec:Numerical_Examples} presents two numerical examples that highlight the effectiveness of the proposed method. Finally, Section \ref{sec:Conclusions} summarizes the findings of this paper.

\section{Notation}\label{sec:Notation}
For an $m$-dimensional manifold $M$ with a chart $(U, \phi)$, we denote $\partial_i \coloneqq \pdv{}{\phi^i}$; i.e., for a point $p \in U$, a germ $f \in C_p^\infty(U)$, and $r^i$ as the $i$-th coordinate of $\mathbb{R}^m$, $\left.\pdv{ }{\phi^i}\right|_p f =\left.\pdv{}{r^i}\right|_{\phi(p)} \left(f \circ \phi^{-1} \right)$. For the Fisher information matrix $g(\theta)$, we place its two indices down; i.e., $g(\theta)_{ij}$, while for its inverse, the indices are shown up; i.e., $g(\theta)^{ij}$. Further, we denote the tangent space of a manifold $M$ at a point $p$ as $T_p M$ and the tangent bundle of $M$ as $TM$. For a smooth mapping $F$ between two manifolds $M$ and $N$, we denote $F_\ast$ as the differential of $F$, such that for $X_p \in T_pM$, $F_\ast X_p \in T_{F(p)} N$ is the \emph{push-forward} of $X_p$. For a parametric density $p_\theta$, we denote $\E_\theta \left[ \cdot \right] := \E_{p_\theta} \left[ \cdot \right]$.

\section{Continuous-Discrete Projection Filter for the Exponential Family}\label{sec:Projection_Filter}
In this section, we review the relevant theoretical results that constitute the foundation of the projection filter for the exponential family \cite{brigo1995,brigo1999}. The aim of the projection filter is to approximate the evolution of the filtering density (which is generally infinite-dimensional) with a finite-dimensional evolution of the natural parameters corresponding to the exponential family. Consider optimal filtering problems on the following state-space model consisting of continuous-time stochastic dynamics and discrete observation models:
\begin{subequations}
  \label{eqs:nonlinear_SDE}
  \begin{align}
    dx_t &= f(x_t) \, dt + \varrho(x_t) \, dW_t,\label{eq:state_nonlinear_SDE}\\
    y_k &\sim p(y_k\mid x_k) \propto \exp(-\ell(x_k,y_k)),\label{eq:likelihood}
  \end{align}
\end{subequations}
where, for a positive sampling interval $\Delta t$, $y_k\coloneqq y_{k\Delta t}$, and $x_t \in \mathbb{R}^d$, $y_k \in \mathbb{R}^{d_y}$. $\{W_t, t \geq 0\}$ is a Wiener process taking values in $\mathbb{R}^{d_w}$, and $\ell(x_k,y_k)$ is the negative log-likelihood function of the discrete measurement process. The evolution of the probability density corresponding to the SDE \eqref{eq:state_nonlinear_SDE} is governed by the Fokker--Planck equation
\begin{equation}
  \pdv{p_t}{t} = \mathcal{L}^\ast(p_t), \label{eq:Fokker_Planck_Equation}
\end{equation}
where $\mathcal{L}^\ast$ is the adjoint of the Kolmogorov operator and is defined as
\begin{equation}
  \mathcal{L}^\ast(p_t) = -\sum_{i=1}^d \pdv{}{x_i} \left( f_i(x) p_t \right) + \frac{1}{2} \sum_{i,j=1}^d \pdv[2]{}{x_i}{x_j} \left( \varrho_{ij}(x) p_t \right). \label{eq:Kolmogorov_Operator}
\end{equation}

Let us define a class of probability densities $\mathcal{P}$ with respect to the Lebesgue measure on a fixed domain $\mathcal{X} \subseteq \mathbb{R}^d$ as
\begin{equation}
  \mathcal{P} = \{p \in L^1 : \int_\mathcal{X} p(x) \, dx = 1,\ p(x) \geq 0,\ \forall x \in \mathcal{X}\}.
\end{equation}
The exponential family is defined as
\begin{align}
    \mathrm{EM}(c) \coloneqq \left\{ p \in \mathcal{P} \colon p(x) = \exp(c(x)^\top\theta - \psi(\theta)) \right\},\label{eq:EM_c}
\end{align}
where $\theta \in \Theta \subset \mathbb{R}^m$ is the natural parameter, and $c\colon\mathbb{R}^d \to \mathbb{R}^m$ is a vector of natural statistics that are assumed to be linearly independent. The natural parameter space $\Theta$ is defined as the collection of all $\theta$ such that the corresponding density $p_\theta$ is in $\mathcal{P}$, i.e.,
\begin{align}
    \Theta \coloneqq \left\{ \theta \in \mathbb{R}^m\colon \int_\mathcal{D} \exp(c(x)^\top \theta) \, dx < \infty \right\}, \label{eq:Theta}
\end{align}
where $\mathcal{D} \subseteq \mathcal{X}$ is the support of $\exp(c(x)^\top \theta)$. An exponential family is said to be regular if $\Theta$ is an open subset of $\mathbb{R}^m$. The cumulant-generating function (i.e., the log Laplace transform or log partition function \cite{brown1986,emzir2023}) is defined by
\begin{align}
    \psi(\theta) = \log \left[ \int_\mathcal{D} \exp(c(x)^\top \theta) \, dx \right], \quad \theta \in \Theta.
    \label{eq:cumulant-generating}
\end{align}
Because the exponential family is assumed to be regular and the natural statistics are linearly independent, the exponential family is minimal \cite{kass1997,calin2014}. We recall the following standard result for a minimal regular exponential family.
\begin{thm}(Theorems 2.2.1 and 2.2.5 of \cite{kass1997})\label{thm:kass_exponential_family_expectation}
    In a regular exponential family, the set $\Theta$ as defined in \eqref{eq:Theta} is convex. The cumulant-generating function $\psi(\theta)$ is strictly convex on $\Theta$ and is differentiable up to an arbitrary order. The moments of the natural statistics $c_i(x)$, $i=1,\ldots,m$, exist for any order, and the expectations of
     $c_i$ and the corresponding Fisher information matrix $g$ are, respectively, given by:
    \begin{align}
        \E_\theta\left[ c_i \right] &= \pdv{\psi(\theta)}{\theta_i}, &
        g(\theta)_{ij} &= \pdv[2]{\psi(\theta)}{\theta_i}{\theta_j}. \label{eq:eta_g_pdv}
    \end{align}
    If the representation is minimal, then $g$ is positive definite.
\end{thm}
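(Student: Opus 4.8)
The plan is to prove the standard properties of a minimal regular exponential family by differentiating the defining identity that expresses normalization, namely $\int_{\mathcal{D}} \exp(c(x)^\top\theta - \psi(\theta))\,dx = 1$. Since this is quoted as a known result (Theorems 2.2.1 and 2.2.5 of a reference), I would give the self-contained analytic argument rather than cite it blindly.

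Let me think about each claim.

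**Convexity of $\Theta$.** I'd use Hölder's inequality. For $\theta_0, \theta_1 \in \Theta$ and $\lambda \in [0,1]$, write $\exp(c(x)^\top(\lambda\theta_0 + (1-\lambda)\theta_1)) = [\exp(c^\top\theta_0)]^\lambda [\exp(c^\top\theta_1)]^{1-\lambda}$ and apply Hölder with conjugate exponents $1/\lambda, 1/(1-\lambda)$ to show the integral is finite (bounded by a product of powers of the two finite integrals). This gives convexity of $\Theta$.

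**Differentiability and the moment/derivative formulas.** The key tool is that one can differentiate under the integral sign for exponential families — this holds because $\Theta$ is open and one can dominate the derivatives locally using the finiteness of the Laplace transform in a neighborhood (analyticity of Laplace transforms in the interior of their domain of convergence). I'd state this as the standard dominated-convergence / analyticity fact. Then differentiating $\psi(\theta) = \log\int\exp(c^\top\theta)\,dx$ once gives $\partial_i\psi = \mathbb{E}_\theta[c_i]$, and differentiating again gives $\partial_i\partial_j\psi = \mathbb{E}_\theta[c_ic_j] - \mathbb{E}_\theta[c_i]\mathbb{E}_\theta[c_j] = \mathrm{Cov}_\theta(c_i,c_j)$, which is exactly $g(\theta)_{ij}$. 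The existence of moments of all orders follows from the same analyticity.

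**Strict convexity and positive definiteness under minimality.** The Hessian of $\psi$ is the covariance matrix $g(\theta)$, which is automatically positive semidefinite. For any $v \in \mathbb{R}^m$, $v^\top g(\theta) v = \mathrm{Var}_\theta(v^\top c)$. This vanishes iff $v^\top c(x)$ is $p_\theta$-almost-surely constant. Since $p_\theta > 0$ on $\mathcal{D}$ and the $c_i$ are assumed linearly independent (as functions, not merely up to an additive constant — minimality), $v^\top c$ constant a.s. forces $v = 0$. Hence $g(\theta) \succ 0$, giving both positive definiteness and, via the positive-definite Hessian, strict convexity of $\psi$.

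\medskip

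I would carry the steps out in this order: (1) Hölder argument for convexity of $\Theta$; (2) invoke analyticity of the Laplace transform / differentiation under the integral to justify termwise differentiation, establishing existence of all moments; (3) compute the first and second derivatives of $\psi$ to get \eqref{eq:eta_g_pdv} and to identify the Hessian with the covariance of the natural statistics; (4) use linear independence of the $c_i$ together with positivity of $p_\theta$ to upgrade the positive-semidefinite covariance to positive definite, which simultaneously yields strict convexity.

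The main obstacle, and the only genuinely nontrivial analytic point, is justifying the interchange of differentiation and integration — i.e., establishing that the cumulant-generating function is smooth (indeed real-analytic) on the open set $\Theta$ with derivatives given by the corresponding integrals. The clean way to handle this is the classical fact that the Laplace transform of a positive measure is analytic in the interior of its domain of convergence, so I would cite or briefly reconstruct that rather than attempt a hands-on dominated-convergence bound for every order; the finiteness of the integral on an open neighborhood provides the local domination needed to push derivatives of all orders inside. Everything after that is a direct computation, and the passage from semidefinite to definite is a short measure-theoretic argument hinging precisely on the minimality hypothesis (linear independence of the natural statistics).
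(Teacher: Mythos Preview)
The paper does not give its own proof of this theorem: it is stated as a quotation of Theorems 2.2.1 and 2.2.5 of Kass and Vos and is used as background, so there is no proof in the paper to compare against. Your outline is the standard textbook argument (H\"older for convexity of $\Theta$, analyticity of the Laplace transform on the interior of its domain to justify differentiation under the integral, direct computation of $\partial_i\psi$ and $\partial_i\partial_j\psi$, and the variance argument for positive definiteness), and it is correct. The only point worth tightening is the minimality step: ``linearly independent'' in this context must be read as ``no nontrivial affine combination of the $c_i$ is zero a.s.'' (equivalently, $1,c_1,\dots,c_m$ are linearly independent), since what you need is that $v^\top c$ cannot be a.s.\ equal to a nonzero constant; make that explicit and the argument is complete.
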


Following \cite{brigo1999}, the projection filter described here is developed on the manifold of square-root exponential densities 
\begin{equation*}
  \text{EM}(c)^{\frac{1}{2}} := \{\sqrt{p_\theta} : p_\theta \in \text{EM}(c)\}.
\end{equation*}
The use of the space of square-root densities $\text{EM}(c)^{\frac{1}{2}}$ is due to the fact that the Fisher information metric can be defined via the standard $L^2(\mathcal{X})$ inner product. As is common in information geometry \cite{amari2000}, we work with a single chart $(\text{EM}(c)^{\frac{1}{2}}, \phi)$, where 
\begin{equation*}
  \phi : \text{EM}(c)^{\frac{1}{2}} \to \Theta \subset \mathbb{R}^m,\quad
  \phi(\sqrt{p_\theta}) \coloneqq \theta,\quad
  \sqrt{p_\theta} \coloneqq \sqrt{p(\cdot;\theta)},
\end{equation*}
(see also \cite{brigo2017,brigo2009} for an alternative representation via the exponential statistical manifold developed in \cite{pistone1995}). The differential of the map $\phi$ is denoted by $\phi_\ast$; i.e., for $X \in T_{\sqrt{p_\theta}}\mathrm{EM}(c)^{\frac{1}{2}}$, $\phi_\ast X \in T_\theta \mathbb{R}^m \cong \mathbb{R}^m$. To reduce technicality, we assume that the support $\mathcal{D}$ of the parametric density $p_\theta \in \text{EM}(c)$ is uniform across $\text{EM}(c)$; see \cite{amari2000}. We equip the manifold of square-root parametric densities with the Fisher information metric, given by
\begin{equation}
  \inner{\partial_i,\partial_j}_{\sqrt{p_\theta}}
  = \int_\mathcal{D} \frac{\partial \sqrt{p_\theta(x)}}{\partial \theta_i}\,\frac{\partial \sqrt{p_\theta(x)}}{\partial \theta_j}\,dx
  = \frac{1}{4}\,g(\theta)_{ij}. \label{eq:metric}
\end{equation}
With this metric, the square-root parametric density manifold becomes a Riemannian manifold, where notions of inner product, distance, and projection are well-defined. In particular, the projection onto the tangent space $T_{\sqrt{p_\theta}}\mathrm{EM}(c)^{\frac{1}{2}}$, 
$\Pi_{\sqrt{p_\theta}}:L^2(\mathcal{X})\to T_{\sqrt{p_\theta}}\mathrm{EM}(c)^{\frac{1}{2}}$, is given by
\begin{equation}
  \Pi_{\sqrt{p_\theta}}(v)
  = \sum_{i=1}^m\sum_{j=1}^m 4\,g(\theta)^{ij}\,\inner{v,\partial_j}_{\sqrt{p_\theta}}\,\partial_i.
  \label{eq:projection_onto_T_EM}
\end{equation}

Using the projection $\Pi_{\sqrt{p_\theta}}$, the continuous-discrete projection filter is implemented in two steps. In the first step, between sampling times $(k-1)\Delta t$ and $k\Delta t$, the dynamics of the square-root filtering densities are projected onto the tangent space $T_{\sqrt{p_\theta}}\mathrm{EM}(c)^{\frac{1}{2}}$. Assuming that at time $t$, $\sqrt{p_t} = \sqrt{p_{\theta_t}}$ for some $\theta_t \in \Theta$, and $d\sqrt{p_{\theta_t}}\in L^2(\mathcal{X})$ (see \cite[Lemma 2.1]{brigo1999}), the evolution of the natural parameters is defined via $\Pi_{\sqrt{p_\theta}}(d\sqrt{p_{\theta_t}}) = \sum_{i=1}^m \dv{\theta^i}{t} \partial_i$, giving 
\begin{gather}
  \begin{aligned}
    \dv{\theta^i}{t}
    &= \sum_{j=1}^m 4\,g(\theta)^{ij}\,\inner{d\sqrt{p_t},\partial_j}_{\sqrt{p_{\theta_t}}}\\
    &= \sum_{j=1}^m 4\,g(\theta)^{ij}\,\inner{\tfrac{1}{2\sqrt{p_t}}\mathcal{L}^\ast(p_t),\partial_j}_{\sqrt{p_{\theta_t}}}\\
    &= \sum_{j=1}^m 4\,g(\theta)^{ij}\,\inner{\tfrac{1}{2\sqrt{p_{\theta_t}}}\mathcal{L}^\ast(p_{\theta_t}),\partial_j}_{\sqrt{p_{\theta_t}}}\\
    &= \sum_{j=1}^m g(\theta)^{ij}\,\E_{\theta}[\mathcal{L}(c_j)],
    \label{eq:projected_Fokker_Planck}
  \end{aligned}
\end{gather}
where $\mathcal{L}$ is the backward Kolmogorov diffusion operator: for a test function $\varphi$,
\begin{equation}
  \mathcal{L}(\varphi)
  = \sum_{i=1}^df_i(x)\,\pdv{\varphi(x)}{x_i}
  + \tfrac{1}{2}\sum_{i,j=1}^d\varrho_{ij}(x)\,\pdv[2]{\varphi(x)}{x_i}{x_j}.
  \label{eq:Kolmogorov_Operator_Backward}
\end{equation}
At the end of the propagation, we obtain $\theta_k^- := \theta_{k\Delta t}$, corresponding to the predictive density $p_{\theta_k^-} \in \mathrm{EM}(c)$.

The second step is the correction step, where the measurement $y_k$ is incorporated to update $p_{\theta_k^-}$ via the likelihood density $p(y_k\mid x_k) \propto \exp(-\ell(x_k,y_k))$. The posterior density $q$ is given by\footnote{Although the vector of natural statistics $c$ is deterministic, the expectation $\E_{\theta_k}[c]$ and other expectations with respect to $p_{\theta_k}$ (such as those in \eqref{eqs:posterior_and_Z} and \eqref{eq:projected_Fokker_Planck}) are stochastic processes, since $\theta_k$ is updated according to the measurement $y_k$.}
\begin{subequations}
  \begin{align}
    q &= p_{\theta_k^-}\,\exp(-\ell(\cdot,y_k)-Z(\theta_k^-,y_k)), \label{eq:posterior}\\
    Z(\theta_k^-,y_k)
      &= \log(\E_{\theta_k^-}[\exp(-\ell(\cdot,y_k))]).
  \end{align}\label{eqs:posterior_and_Z}
\end{subequations}
From this perspective, $q$ as defined in \eqref{eq:posterior} is the posterior density under Bayes' rule, given that the prior density is $p_{\theta_k^-}$. However, $q$ is not the true posterior density of the state $x_k$ given the measurements $y_{1:k}$, unless the true predictive density is equal to $p_{\theta_k^-}$, which is generally not the case. In this paper, since our focus is only on a single Bayesian update step, we refer to $q$ as the posterior density.

When the negative log-likelihood has the form $\ell(x_k,y_k)=(y_k-h(x_k))^\top(y_k-h(x_k))$, with $h$ and $h^\top h$ in the span of the natural statistics, the posterior density remains in $\mathrm{EM}(c)$ and the corresponding natural parameters can be calculated exactly; see \cite[\S 6.2]{brigo1999}. In the next section, we develop a method to generalize the Bayesian update to non-conjugate priors by finding $\sqrt{p_\theta}\in\mathrm{EM}(c)^{\frac{1}{2}}$ that minimizes a divergence function via Riemannian optimization.

\section{Bayesian Update Algorithm}\label{sec:Bayesian_update}
In this section, we derive a Riemannian gradient descent algorithm to minimize the dissimilarity between the actual posterior $q$ given in \eqref{eq:posterior} and the approximated posterior $p_{\theta_k} \in \text{EM}(c)$, as measured by divergences. To do so, we need to define a movement from a point $\sqrt{p_\theta}$ on the manifold $\text{EM}(c)^{\frac{1}{2}}$ in the direction of a vector $X \in T_{\sqrt{p_\theta}} \text{EM}(c)^{\frac{1}{2}}$ via a curve $\gamma(t)$ on $\text{EM}(c)^{\frac{1}{2}}$. The curve is needed since the addition $\sqrt{p_\theta} + \delta X$, for $\delta \in \mathbb{R}$, is not defined. This curve is defined via the geometric concept known as a retraction \cite{boumal2023,absil2008}. In short, we define the retraction on $\text{EM}(c)^{\frac{1}{2}}$, $R_{\sqrt{p_\theta}} X$ as a curve $\gamma(t)$ that satisfies $\gamma(0)=\sqrt{p_\theta}$ and $\dot{\gamma}(0)=X$ for any $X \in T_{\sqrt{p_\theta}} \text{EM}(c)^{\frac{1}{2}}$ (see the Appendix for more details). Once this is defined, we proceed with the description of the dissimilarity criterion that will be employed to identify the optimal square-root density $\sqrt{p_\theta} \in \text{EM}(c)^{\frac{1}{2}}$ that best approximates the square root of the posterior density $\sqrt{q}$. To achieve this objective, we opt for the $\alpha$-R\'enyi divergence, expressed as $D_\alpha(p \parallel q) = \frac{1}{\alpha-1} \log \E_p \left[ \left(\frac{p}{q}\right)^{\alpha-1} \right]$. When $\alpha = 1$, this divergence is conventionally redefined as the Kullback--Leibler (KL) divergence; i.e., $D_1(p \parallel q) \coloneqq D_{KL}(p \parallel q)$ \cite{vanerven2014}. Notably, the divergence $D_\alpha(p \parallel q)$ exhibits symmetry only when $\alpha = \frac{1}{2}$. Indeed, the case of $\alpha=\frac{1}{2}$ is rather special since $D_{\frac{1}{2}}(q \parallel p)$ is directly related to the Hellinger distance $H(p,q) = \sqrt{1-\exp(-\frac{1}{2}D_{\frac{1}{2}}(q \parallel p))}$ \cite{vanerven2014}. Due to this relation, the fact that $H(p,q) = \frac{1}{\sqrt{2}}\norm{\sqrt{p}-\sqrt{q}}_2$, and that the projection of the square-root density via Fokker--Planck dynamics \eqref{eq:projected_Fokker_Planck} is obtained via a projection on $L^2(\mathcal{X})$ (Lemma 2.1 of \cite{brigo1999}), we will specifically choose $D_{\frac{1}{2}}(q \parallel p_\theta)$ as the loss function to be optimized in the numerical examples in Section \ref{sec:Numerical_Examples}. Specifically, for $\alpha=\frac{1}{2}$, with $q$ given by \eqref{eq:posterior}, the explicit form of $D_{\frac{1}{2}}(q \parallel p_\theta)$ reads
  \begin{gather}
    \begin{aligned}
      D_{\frac{1}{2}}(q \parallel p_\theta) =& -2 \log(\E_{\theta_k^-}\left[\exp(\frac{1}{2} c^\top(\theta_k^- - \theta) - \ell(\cdot,y_k))  \right]) \\
      &+ \left( \psi(\theta_k^-) + Z(\theta_k^-,y_k) - \psi(\theta) \right). \label{eq:D_1_2_explicit}
    \end{aligned}  
  \end{gather}
In what follows, we derive the Riemannian gradient of $D_\alpha(p \parallel q)$, describe the Riemannian gradient descent method for optimization, and verify that the optimization procedure leads to a unique global minimum for any $\alpha \in (0,1)$.

Let the exponential density $p_{\theta_k^-}$ be the predictive density and $q$ be the posterior density as defined in \eqref{eq:posterior}. In what follows, we assume the support of $\exp(-\ell(\cdot,y_k))$ is $\mathcal{D}$. Given a predictive parameter vector $\theta_k^-$, we cast the Bayesian update problem as an optimization problem where we aim to find $\theta \in \Theta$ such that $D_\alpha(p_\theta \parallel q)$ or $D_\alpha(q \parallel p_\theta)$ is minimized. For a regular $\alpha \in (0,1)$, define
\begin{equation}
  A_\alpha(\theta) \coloneqq \E_q \left[\left(\frac{q}{p_\theta}\right)^{\alpha-1}\right] = \E_\theta \left[\left(\frac{q}{p_\theta}\right)^\alpha\right]. \label{eq:A_alpha}
\end{equation}
The following proposition provides the Riemannian gradient of $D_\alpha(q \parallel p_\theta)$ in local coordinates. For brevity, let $\eta(\theta) = \E_\theta[c]$.
\begin{prop}\label{prp:grad_omega_alpha}
  Let $\omega_\alpha(\sqrt{p_\theta}) \coloneqq D_\alpha(q \parallel p_\theta)$, where $\alpha \in (0,1)$. The Riemannian gradient of $\omega_\alpha$ (denoted by $ \mathrm{grad}\omega_\alpha$) can be written in local coordinates as 
\begin{equation}
  \mathrm{grad}\omega_\alpha(\sqrt{p_\theta}) = 4 \sum_{j=1}^m g^{ij}(\theta)[ \eta_j(\theta) - \eta_{\alpha,j}(\theta) ] \partial_i,  \label{eq:grad_omega_alpha}
\end{equation}
where,
\begin{equation}
  \eta_{\alpha,i}(\theta)  \coloneqq  \frac{1}{A_\alpha(\theta)} \E_\theta \left[\left(\frac{q}{p_\theta}\right)^\alpha c_i\right]. \label{eq:c_hat_alpha}
\end{equation}
\end{prop}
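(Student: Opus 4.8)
The plan is to use the standard coordinate expression for the Riemannian gradient and reduce the entire computation to a single partial derivative of $\omega_\alpha$ with respect to the natural parameter. Recall that the Riemannian gradient is characterized by $\inner{\mathrm{grad}\,\omega_\alpha, X}_{\sqrt{p_\theta}} = X(\omega_\alpha)$ for every $X \in T_{\sqrt{p_\theta}}\mathrm{EM}(c)^{\frac{1}{2}}$. Writing $X = \sum_i X^i \partial_i$ and using the Fisher metric \eqref{eq:metric}, whose matrix is $\tfrac{1}{4}g(\theta)$ and whose inverse is therefore $4\,g(\theta)^{ij}$, the gradient in local coordinates becomes $\mathrm{grad}\,\omega_\alpha = \sum_{i,j} 4\,g(\theta)^{ij}\,(\partial_j\omega_\alpha)\,\partial_i$. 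Hence it suffices to compute the coordinate derivatives $\partial_j\omega_\alpha$ and to show that they equal $\eta_j(\theta)-\eta_{\alpha,j}(\theta)$.

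For the derivative itself, I would first rewrite $A_\alpha(\theta)$ in integral form. Starting from \eqref{eq:A_alpha} and expanding the expectation against $p_\theta$, one gets $A_\alpha(\theta) = \int_{\mathcal{D}} q(x)^\alpha\,p_\theta(x)^{1-\alpha}\,dx$, so that $\omega_\alpha = \tfrac{1}{\alpha-1}\log A_\alpha(\theta)$ and $\partial_j\omega_\alpha = \tfrac{1}{\alpha-1}\,(\partial_j A_\alpha)/A_\alpha$. The essential algebraic input is the exponential-family identity: differentiating $p_\theta = \exp(c^\top\theta - \psi(\theta))$ and using $\partial_j\psi = \eta_j$ from Theorem \ref{thm:kass_exponential_family_expectation} gives $\partial_j p_\theta = p_\theta\,(c_j - \eta_j)$. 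Differentiating under the integral sign then yields $\partial_j A_\alpha = (1-\alpha)\int_{\mathcal{D}} q^\alpha p_\theta^{1-\alpha}(c_j-\eta_j)\,dx$.

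It remains to recognize the two resulting terms. The piece containing $c_j$ is $\int_{\mathcal{D}} q^\alpha p_\theta^{1-\alpha}c_j\,dx = \E_\theta\!\left[(q/p_\theta)^\alpha c_j\right] = A_\alpha(\theta)\,\eta_{\alpha,j}(\theta)$ by the definition \eqref{eq:c_hat_alpha}, while the piece containing $\eta_j$ equals $\eta_j(\theta)\,A_\alpha(\theta)$. Substituting, the factor $A_\alpha(\theta)$ cancels and $(1-\alpha)/(\alpha-1) = -1$, leaving $\partial_j\omega_\alpha = \eta_j(\theta) - \eta_{\alpha,j}(\theta)$. Inserting this into the coordinate formula for the gradient produces \eqref{eq:grad_omega_alpha}.

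I expect the only genuine subtlety to be the justification of differentiation under the integral sign; the rest is bookkeeping. This is where the regularity supplied by Theorem \ref{thm:kass_exponential_family_expectation}, namely the smoothness of $\psi$ and the existence of all moments of the natural statistics on the open set $\Theta$, together with the standing assumption that $\exp(-\ell(\cdot,y_k))$ is supported on $\mathcal{D}$ and integrable against $p_\theta$, should furnish a dominating function on a neighborhood of $\theta$ and thereby legitimize the interchange of $\partial_j$ and $\int_{\mathcal{D}}$. I would also note in passing that the same computation carried out for $D_\alpha(p_\theta\parallel q)$ differs only in the choice of reference measure in the expectation, so the symmetric case $\alpha=\tfrac{1}{2}$ underlying \eqref{eq:D_1_2_explicit} is recovered without extra work.
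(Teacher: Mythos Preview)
Your proposal is correct and follows essentially the same route as the paper: both compute $\partial_j\omega_\alpha$ by differentiating the integral representation of $A_\alpha(\theta)=\int_{\mathcal D} q^\alpha p_\theta^{1-\alpha}\,dx$ using $\partial_j p_\theta = p_\theta(c_j-\eta_j)$, identify the resulting terms as $\eta_j$ and $\eta_{\alpha,j}$, and then convert to the Riemannian gradient via the inverse of the metric $\tfrac14 g(\theta)$. Your write-up is slightly more explicit about the inverse-metric step and about the dominated-convergence justification, neither of which the paper spells out, but the argument is the same.
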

\begin{proof}
  By the definition of $\text{grad}\omega_\alpha$,
  \begin{align}
    \inner{\text{grad}\omega_\alpha , \partial_j}_{\sqrt{p_\theta}} =& \frac{1}{\alpha-1} \frac{1}{A_\alpha(\theta)} \int_\mathcal{D} \frac{\partial}{\partial \theta_j} \left(\frac{q^{\alpha-1}}{p_\theta^{\alpha-1}}\right) q \, dx \nonumber\\
    =& -\left[ \frac{1}{A_\alpha(\theta)} \E_\theta \left[\left(\frac{q}{p_\theta}\right)^\alpha c_j\right] - \eta_j(\theta)\right] \nonumber\\
    =& \eta_j(\theta)  - \eta_{\alpha,j}(\theta).
  \end{align}
 
  Therefore, since $\inner{\partial_i, \partial_j} = \frac{1}{4} g_{ij}(\theta)$, we can write the gradient in local coordinates as $\text{grad}\omega_{\alpha}(\sqrt{p_\theta}) = \sum_{i=1}^m w^i \partial_i$, where the component $w^i$ is as follows:
    \begin{align*}
      \inner{\text{grad}\omega_\alpha, \partial_i}_{\sqrt{p_\theta}} =& \sum_{j=1}^m  \inner{w^j \partial_j, \partial_i}_{\sqrt{p_\theta}} = \frac{1}{4} \sum_{j=1}^m w^j g_{ij}(\theta).
    \end{align*}
    Hence, we obtain \eqref{eq:grad_omega_alpha}.
  \end{proof}
  If we opt to optimize the opposite R\'enyi divergence $D_\alpha(p_\theta \parallel q)$, the Riemannian gradient given by \eqref{eq:grad_omega_alpha} can also be used to obtain the Riemannian gradient of $D_\alpha(p_\theta \parallel q)$, since for $\alpha \in (0,1)$, $D_\alpha(p_\theta \parallel q) = \frac{\alpha}{1-\alpha} D_{1-\alpha}(q \parallel p_\theta)$ (Proposition 2 \cite{vanerven2014}). 
  
  We now proceed with the convergence analysis of the local optima of $D_{\alpha}(q \parallel p_\theta)$. For our subsequent analysis, define the set of parameters with vanishing gradient as follows:
  \begin{equation}
      \Theta_{\alpha,\ast} \coloneqq \left\{ \theta \in \Theta : \eta(\theta) = \eta_\alpha(\theta) \right\}. \label{eq:VanishingGradientSet}
  \end{equation}
  By \eqref{eq:grad_omega_alpha} and the minimality of $\text{EM}(c)$, $\theta$ belongs to $\Theta_{\alpha,\ast}$ if and only if $\text{grad}\omega_\alpha(\sqrt{p}_{\theta})=0$.
  
  In the case where the likelihood function $\exp(-\ell(\cdot,y_k))$ is conjugate to the prior \footnote{As in standard statistical definitions, by saying that the likelihood function is conjugate to a prior (in this case the exponential family $\text{EM}(c)$) or that a prior is conjugate to a likelihood function, we mean that the multiplication of any $p_\theta \in \text{EM}(c)$ and the likelihood function $\exp(-\ell(\cdot,y_k))$ is also an element of $\text{EM}(c)$ (after normalization); see, e.g., \cite[Chapter 3]{raiffa2000}.}, then there exists $\theta \in \Theta$ such that $p_\theta = q$ and hence automatically $\eta(\theta) = \eta_\alpha(\theta)$, which leads to $\text{grad}\omega_\alpha = 0$. The following proposition gives the precise statement; see also Section 6.2 of \cite{brigo1999}. 
  \begin{prop}\label{prp:conjugate_condition}
  Let $\theta_0\in \Theta$ and $q = p_{\theta_0}\exp(-\ell(\cdot,y_k) - Z(\theta_0))$, where the support of $\exp(-\ell(\cdot,y_k))$ is also $\mathcal{D}$. For any $\alpha \in (0,1)$, if there exists $\theta_\ell \in \mathbb{R}^m$ where the negative log-likelihood can be written as $-\ell(\cdot,y_k) = c^\top \theta_\ell$, and $Z(\theta_0)<\infty$, then $\theta_\ast = \theta_0 - \theta_\ell \in \Theta_{\alpha,\ast}$ and $D_\alpha(q \parallel p_{\theta_\ast})=0$. 
  \end{prop}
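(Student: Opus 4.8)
The plan is to reduce everything to the single observation that, under the stated hypothesis, the posterior $q$ is itself a member of $\mathrm{EM}(c)$. Once I exhibit a parameter $\theta_\ast \in \Theta$ with $q = p_{\theta_\ast}$, both conclusions follow essentially for free: the $\alpha$-R\'enyi divergence of a density against itself vanishes, and the vanishing-gradient condition \eqref{eq:VanishingGradientSet} is automatic because the likelihood ratio $q/p_{\theta_\ast}$ is then identically one.

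First I would write out $q$ explicitly. Substituting the exponential-family form $p_{\theta_0}(x) = \exp(c(x)^\top\theta_0 - \psi(\theta_0))$ and the linear negative log-likelihood $-\ell(\cdot,y_k) = c^\top\theta_\ell$ into \eqref{eq:posterior}, and collecting the two exponentials, gives
\begin{equation*}
  q(x) = \exp\!\left( c(x)^\top(\theta_0+\theta_\ell) - \psi(\theta_0) - Z(\theta_0) \right).
\end{equation*}
Writing $\theta_\ast$ for the indicated combination of $\theta_0$ and $\theta_\ell$, two things then need checking. The first is feasibility, i.e.\ $\theta_\ast \in \Theta$, and this is exactly where the hypothesis $Z(\theta_0)<\infty$ enters: evaluating the normalizer,
\begin{equation*}
  Z(\theta_0) = \log\!\int_\mathcal{D} \exp(c^\top\theta_0 - \psi(\theta_0))\exp(c^\top\theta_\ell)\,dx = \log\!\int_\mathcal{D}\exp(c^\top\theta_\ast)\,dx - \psi(\theta_0),
\end{equation*}
so $Z(\theta_0)<\infty$ forces $\int_\mathcal{D}\exp(c^\top\theta_\ast)\,dx<\infty$, which is precisely the defining condition \eqref{eq:Theta} for $\theta_\ast\in\Theta$. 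The same computation also shows $\psi(\theta_0)+Z(\theta_0) = \psi(\theta_\ast)$ via \eqref{eq:cumulant-generating}, so the exponent above is exactly $c^\top\theta_\ast - \psi(\theta_\ast)$ and hence $q = p_{\theta_\ast}$, not merely proportional to it.

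With $q = p_{\theta_\ast}$ in hand the two claims are immediate. For the divergence, $D_\alpha(q\parallel p_{\theta_\ast}) = \tfrac{1}{\alpha-1}\log \E_q[(q/p_{\theta_\ast})^{\alpha-1}] = \tfrac{1}{\alpha-1}\log 1 = 0$. For membership in $\Theta_{\alpha,\ast}$, the identity $q/p_{\theta_\ast}\equiv 1$ makes $A_\alpha(\theta_\ast)=1$ in \eqref{eq:A_alpha} and reduces \eqref{eq:c_hat_alpha} to $\eta_{\alpha}(\theta_\ast) = \E_{\theta_\ast}[c] = \eta(\theta_\ast)$; by \eqref{eq:VanishingGradientSet} this means $\theta_\ast\in\Theta_{\alpha,\ast}$, equivalently $\mathrm{grad}\,\omega_\alpha(\sqrt{p_{\theta_\ast}})=0$ through \eqref{eq:grad_omega_alpha}.

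There is no deep obstacle here; the content is the bookkeeping in the two checks above, and the point that most deserves care is distinguishing ``$q$ is proportional to an exponential-family density'' from ``$q$ equals one.'' The stated conclusion $D_\alpha(q\parallel p_{\theta_\ast})=0$ genuinely needs the normalizer $Z(\theta_0)$ to absorb exactly $\psi(\theta_\ast)-\psi(\theta_0)$, which is guaranteed by the definition of $Z$ together with the shared-support assumption on $\exp(-\ell(\cdot,y_k))$ and $p_{\theta_0}$. I would also be careful with the sign convention relating $\theta_\ell$ to $\theta_\ast$: under $-\ell = c^\top\theta_\ell$ the algebra produces the combination $\theta_0+\theta_\ell$, so the identification of $\theta_\ast$ should be tracked consistently with that convention.
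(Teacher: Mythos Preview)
Your proof is correct and follows essentially the same route as the paper: both verify $\theta_\ast\in\Theta$ from $Z(\theta_0)<\infty$, identify $q=p_{\theta_\ast}$, and read off the two conclusions; the paper additionally packages $\eta_\alpha(\theta)$ as $\E_{\tilde q_{\alpha,\theta}}[c]$ via an auxiliary density $\tilde q_{\alpha,\theta}\coloneqq q^\alpha\exp((1-\alpha)c^\top\theta-\rho(\theta))$ (which it reuses in later lemmas), but for this proposition your direct use of $q/p_{\theta_\ast}\equiv 1$ in \eqref{eq:A_alpha}--\eqref{eq:c_hat_alpha} is equivalent and slightly cleaner. Your caution about the sign of $\theta_\ell$ is well placed---the paper's statement ($-\ell=c^\top\theta_\ell$ with $\theta_\ast=\theta_0-\theta_\ell$) and its own proof (which computes $Z(\theta_0)$ as if $\exp(-\ell)=\exp(-c^\top\theta_\ell)$) are mutually inconsistent on exactly this point.
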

  \begin{proof}
    Observe that in the case where $-\ell(\cdot,y_k) = c^\top \theta_\ell$
    \begin{align*}
      Z(\theta_0) =& \log(\int_{\mathcal{D}} \exp(-c^\top \theta_\ell)\exp(c^\top \theta_0 - \psi(\theta_0)) \, dx)\\
      =& \psi(\theta_0-\theta_\ell) - \psi(\theta_0) < \infty.
    \end{align*}
    Since $\theta_0 \in \Theta$ and $\psi(\theta_0-\theta_\ell) < \infty$, $\theta_0-\theta_\ell = \theta_\ast \in \Theta$. Let 
    \begin{subequations}
      \begin{align}
        \tilde{q}_{\alpha,\theta}\coloneqq& q^\alpha \exp(\tilde{c}^\top\theta - \rho(\theta)), \label{eq:tilde_q}\\
        \rho(\theta) \coloneqq& \log (\int_{\mathcal{D}} \exp(\tilde{c}^\top \theta) q^\alpha \, dx ), \label{eq:rho}
      \end{align}
    \end{subequations}
    where $\tilde{c} \coloneq (1-\alpha)c$. It is straightforward to see that $p_{\theta_\ast} = q = \tilde{q}_{\alpha,\theta_\ast}$, and hence
    \begin{equation}
      \eta(\theta_\ast)-\eta_\alpha(\theta_\ast) = \int_\mathcal{D} c  \left(p_{\theta_\ast} - \tilde{q}_{\alpha,\theta_\ast} \right)\,dx = 0.
    \end{equation}
    Therefore, $\theta_\ast \in \Theta_{\alpha,\ast}$ and $D_\alpha(q \parallel p_{\theta_\ast})=0$. 
  \end{proof}

  The condition for the non-conjugate case is more complex, and the optimal vector parameter $\theta_\ast$ will likely need to be calculated through approximation. However, when $\alpha = 1$, $\eta_\alpha(\theta) = \E_q[c]$; i.e., it does not depend on $\theta$. In this case, if $\Theta_{\alpha,\ast}$ is non-empty, then it contains only one element due to the diffeomorphism between $\theta$ and $\eta(\theta)$; see Theorem 2.2.3 \cite{kass1997}. When $\alpha \neq 1$, the uniqueness does not hold unless additional conditions on the negative log-likelihood are imposed. We will come back to the uniqueness issue shortly. In what follows, we will show that regardless of the uniqueness of the parameter vectors with vanishing gradient, if the density parameters $\theta$ are updated via Riemannian gradient flow \eqref{eq:Riemannian_Gradient_Flow_Constant_Rate}, then $\theta$ approaches $\Theta_{\alpha,\ast}$ asymptotically. 
  \begin{prop}
    Suppose for $\theta \in \Theta$, $p_\theta$ and $q$ have the same support $\mathcal{D}$, where $p_\theta \in \text{EM}(c)$, a minimal exponential family. Let the dynamics of the parameter vector be given by 
    \begin{equation}
        \dv{\theta(t)}{t} = -4 \delta\,g(\theta(t))^{-1} [\eta(\theta(t)) - \eta_{\alpha}(\theta(t)) ], \quad \delta >0.
    \label{eq:Riemannian_Gradient_Flow_Constant_Rate}
    \end{equation}
    For any $\alpha \in [\tfrac{1}{2},1]$, the set $\Theta_{\alpha,\ast}$ defined by \eqref{eq:VanishingGradientSet} is globally asymptotically stable under \eqref{eq:Riemannian_Gradient_Flow_Constant_Rate}.
\end{prop}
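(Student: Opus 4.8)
The plan is to prove the statement with LaSalle's invariance principle, taking the divergence itself, $\omega_\alpha(\sqrt{p_\theta}) = D_\alpha(q \parallel p_\theta)$, as the Lyapunov function. The first step is to recognize that \eqref{eq:Riemannian_Gradient_Flow_Constant_Rate} is exactly Riemannian gradient descent for $\omega_\alpha$: by Proposition \ref{prp:grad_omega_alpha} the contravariant components of $\mathrm{grad}\,\omega_\alpha$ are $w^i = 4\sum_j g^{ij}(\theta)[\eta_j(\theta)-\eta_{\alpha,j}(\theta)]$, so \eqref{eq:Riemannian_Gradient_Flow_Constant_Rate} reads $\dot\theta^i = -\delta w^i$. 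Differentiating $\omega_\alpha$ along a trajectory and using the identity $\partial\omega_\alpha/\partial\theta^i = \inner{\mathrm{grad}\,\omega_\alpha,\partial_i}_{\sqrt{p_\theta}} = \eta_i(\theta)-\eta_{\alpha,i}(\theta)$ established in the proof of Proposition \ref{prp:grad_omega_alpha}, I obtain
\[
  \frac{d}{dt}\,\omega_\alpha(\theta(t)) = -4\delta\,[\eta(\theta)-\eta_\alpha(\theta)]^\top g(\theta)^{-1}[\eta(\theta)-\eta_\alpha(\theta)] \le 0 .
\]
Since $\mathrm{EM}(c)$ is minimal, $g(\theta)$ and hence $g(\theta)^{-1}$ are positive definite (Theorem \ref{thm:kass_exponential_family_expectation}), so this derivative is strictly negative unless $\eta(\theta)=\eta_\alpha(\theta)$, i.e.\ vanishes if and only if $\theta\in\Theta_{\alpha,\ast}$.

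Second, I need the standing hypotheses for LaSalle: that $\omega_\alpha$ is bounded below and that forward orbits are precompact in $\Theta$. Boundedness below is immediate, since the R\'enyi divergence is non-negative, so $\omega_\alpha\ge 0$. Precompactness is the crux. Because $\omega_\alpha$ is non-increasing along \eqref{eq:Riemannian_Gradient_Flow_Constant_Rate}, each forward orbit is contained in a sublevel set $\{\theta\in\Theta : \omega_\alpha(\theta)\le c\}$, so it suffices to show these sublevel sets are compact subsets of $\Theta$, i.e.\ that $\omega_\alpha$ is \emph{proper} (radially unbounded, and blowing up as $\theta$ approaches $\partial\Theta$). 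Writing $\beta = 1-\alpha$ and expanding $\omega_\alpha$ as in the derivation of \eqref{eq:D_1_2_explicit}, one finds that $\omega_\alpha(\theta)$ equals $\psi(\theta)$ minus $\tfrac{1}{\beta}$ times a convex log-partition-type function of $\theta$, up to a $\theta$-independent constant; at the endpoint $\alpha=1$ this specializes to a Kullback--Leibler divergence with $\eta_\alpha(\theta)=\E_q[c]$ constant, for which properness over a minimal family is standard. The properness estimate that controls the growth of these two convex pieces against one another is exactly where the restriction $\alpha\in[\tfrac12,1]$ enters.

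Third, with precompactness in hand LaSalle's invariance principle applies: the $\omega$-limit set of every trajectory is a nonempty, compact, invariant subset of the set on which $\frac{d}{dt}\omega_\alpha$ vanishes, which by the first step is exactly $\Theta_{\alpha,\ast}$. Because every point of $\Theta_{\alpha,\ast}$ is an equilibrium of \eqref{eq:Riemannian_Gradient_Flow_Constant_Rate}, the largest invariant set it contains is itself, so $\mathrm{dist}(\theta(t),\Theta_{\alpha,\ast})\to 0$ for every initial condition in $\Theta$; this is global attractivity. Lyapunov stability of the set then follows from the monotonicity of $\omega_\alpha$ together with its properness, since the sublevel sets then furnish a forward-invariant neighbourhood basis of $\Theta_{\alpha,\ast}$. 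Combining stability with attractivity yields global asymptotic stability.

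Finally, I should flag the main difficulty. The monotonicity computation and the LaSalle step are routine once the gradient formula of Proposition \ref{prp:grad_omega_alpha} and the positive-definiteness of $g$ are available; the substantive obstacle is establishing properness of $\omega_\alpha$, because in the decomposition $\psi(\theta)-\tfrac1\beta(\cdots)$ the leading growth of the two convex pieces cancels to first order along rays $\theta = tu$, so ruling out escape to $\partial\Theta$ requires a careful comparison of the subleading terms. I expect this to be precisely where the hypothesis $\alpha\ge\tfrac12$ is essential and where most of the effort lies.
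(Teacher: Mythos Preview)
Your overall architecture---take $\omega_\alpha = D_\alpha(q\parallel p_\theta)$ as a Lyapunov function, compute its derivative along the flow via Proposition~\ref{prp:grad_omega_alpha}, and invoke LaSalle---matches the paper exactly, as does your identification of compactness of the sublevel sets $\{\theta:\omega_\alpha(\theta)\le K\}$ as the crux. Your derivative computation and the conclusion that it vanishes precisely on $\Theta_{\alpha,\ast}$ are correct.

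The gap is the properness step, which you flag as unresolved. Your proposed route---write $\omega_\alpha(\theta)=\psi(\theta)-\tfrac{1}{1-\alpha}\rho(\theta)+\text{const}$ and compare the growth of the two convex pieces along rays---is not carried out, and it is far from clear that this cancellation analysis works or that $\alpha\ge\tfrac12$ is what saves it. The paper bypasses this entirely with a much cleaner argument: it invokes the inequality $H(p_\theta,q)^2\le D_\alpha(q\parallel p_\theta)$, valid for all $\alpha\in[\tfrac12,1]$ (Theorem~3, eq.~(7) of \cite{vanerven2014}; \emph{this} is exactly where the restriction on $\alpha$ enters), so any two points $\theta_1,\theta_2$ in a sublevel set $\Theta_K$ satisfy $\|\sqrt{p_{\theta_1}}-\sqrt{p_{\theta_2}}\|_{L^2}\le 2\sqrt{2K}$ by the triangle inequality through $\sqrt{q}$. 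Boundedness in the natural parameter then follows from a mean-value argument: $\|\sqrt{p_{\theta_1}}-\sqrt{p_{\theta_2}}\|_{L^2}^2 = \tfrac14(\theta_1-\theta_2)^\top g(\theta_c)(\theta_1-\theta_2)$ for some $\theta_c$ on the segment (which lies in $\Theta$ by convexity), and positive-definiteness of the Fisher matrix turns this into a lower bound $\epsilon\|\theta_1-\theta_2\|_2^2$. Closedness is immediate from continuity, and the rest is Barbashin--Krasovskii--LaSalle as you outline. So your speculation about why $\alpha\ge\tfrac12$ matters is aimed at the wrong mechanism; the role of the hypothesis is not a delicate ray-by-ray growth comparison but a single Hellinger--R\'enyi inequality.
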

\begin{proof}
  Let us define $\tilde{\omega}_\alpha(\theta) := \omega_\alpha(\sqrt{p_\theta}) = D_\alpha(q\parallel p_\theta)$ for any $\theta \in \Theta$, where $\Theta$ is defined in Eq. (3). Since $D_\alpha(q\parallel p) \geq 0$ (Theorem 8 \cite{vanerven2014}), we can choose $\tilde{\omega}_\alpha(\theta)$ as a Lyapunov function candidate. Consider a path $\gamma(t) := \sqrt{p_\theta(t)} \in \text{EM}(c)^{\frac{1}{2}}$ where $\theta(0) \in \Theta$ and the time derivative $\dot{\theta}(t)$ is given by \eqref{eq:Riemannian_Gradient_Flow_Constant_Rate}. Let the level set $\Theta_K := \{\theta \in \Theta : \tilde{\omega}_\alpha(\theta) \leq K \}$. In particular, consider the case with $K = \tilde{\omega}_\alpha(\theta(0))$. 
  First, we claim that $\Theta_K$ is a compact set. To see this, first, the level set $\Theta_K \subset \Theta$ is $m$-dimensional. Hence, to be a compact set, we need to show that it is closed and bounded. The level set $\Theta_K$ is closed since $\tilde{\omega}_\alpha(\theta)$ is continuous on $\theta$ and the preimage of a closed interval $[0, K]$ under $\tilde{\omega}_\alpha$ is also closed. For the boundedness, suppose $\theta_1, \theta_2 \in \Theta_K$. According to the inequality (see eq. 7, Theorem 3 of \cite{vanerven2014}),
  \begin{equation*}
    H(p_{\theta_i}, q)^2 \leq D_\alpha (q \parallel p_\theta), ~\alpha \in [\frac{1}{2},1],
  \end{equation*}
  where $H(p,q)$ is the Hellinger distance between $p$ and $q$. Hence,
  \begin{align*}
    \norm{\sqrt{p_{\theta_1}}-\sqrt{p_{\theta_2}}}_{L^2} \leq& \norm{\sqrt{p_{\theta_1}}-\sqrt{q}}_{L^2} + \norm{\sqrt{p_{\theta_2}}-\sqrt{q}}_{L^2} \\
    =& \sqrt{2} \left[ H(q, p_{\theta_1}) + H(q, p_{\theta_2}) \right] \\
    \leq& \sqrt{2} \left[ \sqrt{D_\alpha (q \parallel p_{\theta_1})} + \sqrt{D_\alpha (q \parallel p_{\theta_2})} \right] \\
    =& 2 \sqrt{2} \sqrt{K}.
  \end{align*}
  Moreover, by the mean-value theorem, there exists $\theta_c$ in the interior of the line connecting $\theta_1$ and $\theta_2$ such that
  \begin{align*}
    &\norm{\sqrt{p_{\theta_1}}-\sqrt{p_{\theta_2}}}_{L^2}^2 =  \int (\sqrt{p_{\theta_1}}-\sqrt{p_{\theta_2}})^2 \, dx  \\
    &= \int (\sqrt{p_{\theta_1}}-( \sqrt{p_{\theta_1}} + \frac{1}{2} \sqrt{p_{\theta_c}} \left( c - \eta(\theta_c) \right)^\top (\theta_1 - \theta_2) ))^2 \, dx  \\
    &= \frac{1}{4} \int p_{\theta_c} (\theta_1 - \theta_2)^\top\left( c - \eta(\theta_c) \right)\left( c - \eta(\theta_c) \right)^\top (\theta_1 - \theta_2) \, dx  \\
    &= \frac{1}{4} (\theta_1-\theta_2)^\top g(\theta_c)(\theta_1-\theta_2) \geq \epsilon \norm{\theta_1 - \theta_2}_2^2
  \end{align*}
  for some $\epsilon > 0$ since $\text{EM}(c)$ is regular. The vector $\theta_c$ belongs to $\Theta$ since $\Theta$ is a convex set. Hence, we have for any $\theta_1, \theta_2 \in \Theta_K$,
  $\norm{\theta_1 - \theta_2}_2 \leq \frac{2\sqrt{2K}}{\sqrt{\epsilon}} < \infty$.
  Next, there exists $V \in T_{\sqrt{p_{\theta(0)}}} \text{EM}(c)^{\frac{1}{2}}$, $V = \dot{\gamma}(0)$ where in the local coordinate $V = \sum w^i \partial_i$ with $w^i = -\delta \sum_{j=1}^m g^{ij}(\theta(0))\left[ \E_{\theta(0)} [c_j] - \eta_{\alpha,j}(\theta(0)) \right]$. In other words, $V = -\delta \left.\text{grad} \omega_\alpha \right|_{\sqrt{p_{\theta(0)}}}$. Therefore, we can write
  \begin{align*}
  \left. \frac{d}{dt} \tilde{\omega}_\alpha \right|_{t=0} &= \left. \frac{d}{dt} \left[ \omega_\alpha (\sqrt{p_{\theta(t)}}) \right]\right|_{t=0}
  = \inner{ \text{grad}\omega_\alpha, V }_{\sqrt{p_{\theta(0)}}} \\
  &= -\delta \norm{\text{grad}\omega_\alpha}^2_{\sqrt{p_{\theta(0)}}}
  \end{align*}
  In this equation, the inner product $\inner{ \text{grad}\omega_\alpha, V }_{\sqrt{p_{\theta(0)}}}$ is the Riemannian inner product with respect to its metric.

  The last exposition makes the set $\Theta_K$ positively invariant with respect to the dynamics of $\theta(t)$. Since the Fisher metric $g(\theta)$ is positive definite for any $\theta \in \Theta$, $\norm{\text{grad}\omega_\alpha}^2_{\sqrt{p_{\theta(0)}}}$ will only be zero at the set of vanishing gradient $\Theta_{\alpha,\ast}$. By the Barbashin-Krasovskii-LaSalle Theorem (Theorem 3.3 \cite{haddad2008}), starting at $\theta(0)\in \Theta_K$, $\theta(t)$ asymptotically approaches $\Theta_{\alpha,\ast}$. Since the negative log-likelihood and the parametric density $p_\theta$ share the same support $\mathcal{D}$, then $D_\alpha(q\parallel p_\theta)<\infty$, and so is $\tilde{\omega}_\alpha$ by definition. Thereby, since $K$ can be chosen arbitrarily large and $\text{EM}^{\frac{1}{2}}(c)$ is open, then the convergence also holds for any $\theta \in \Theta$.
\end{proof}
Next, we present a sufficient condition that will ensure that $\Theta_{\alpha,\ast}$ is a singleton in the case $\alpha \neq 1$. This guarantees that there is a unique global minimum for the optimization of $D_\alpha(q\parallel p_\theta)$. We begin with the following lemma:

\begin{lem}\label{lem:StrictlyConvex}
    Let $\tilde{q}_{\alpha,\theta}$ and $\rho(\theta)$ be defined by \eqref{eq:tilde_q} and \eqref{eq:rho}, respectively. If for all $\theta \in \Theta$,
    \begin{equation}
      \pdv[2]{\psi (\theta)}{\theta} \succ (1-\alpha)\int \left( c - \eta_\alpha(\theta) \right) \left( c - \eta_\alpha(\theta) \right)^\top \tilde{q}_{\alpha,\theta} \, dx,
      \label{eq:positive_definite_condition}
    \end{equation}
    then the $\alpha$-R\'{e}nyi divergence $D_\alpha(q \parallel p_\theta)$ has a positive definite Hessian on $\Theta$.
\end{lem}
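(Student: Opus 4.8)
The plan is to reduce the Hessian of $D_\alpha(q \parallel p_\theta)$ to a difference of two explicit matrices and then observe that the hypothesized inequality \eqref{eq:positive_definite_condition} is exactly the statement that this difference is positive definite. First I would obtain a closed form for the divergence in terms of the two log-partition functions $\psi$ and $\rho$. Starting from $A_\alpha(\theta)$ in \eqref{eq:A_alpha} and substituting $p_\theta = \exp(c^\top\theta - \psi(\theta))$, I get $A_\alpha(\theta) = \int_\mathcal{D} q^\alpha\, p_\theta^{1-\alpha}\,dx = \exp\!\left(\rho(\theta) - (1-\alpha)\psi(\theta)\right)$, where I use $\tilde{c} = (1-\alpha)c$ and the definition of $\rho$ in \eqref{eq:rho}. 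Taking logarithms and multiplying by $\frac{1}{\alpha-1}$ yields the compact expression $D_\alpha(q\parallel p_\theta) = \psi(\theta) - \frac{1}{1-\alpha}\rho(\theta)$.

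Once the divergence is written this way, the Hessian splits additively, and each piece is a log-partition-function computation. The first term is handled by Theorem \ref{thm:kass_exponential_family_expectation}, which gives $\pdv[2]{\psi(\theta)}{\theta} = g(\theta)$. For the second term I treat $\rho(\theta)$ as the cumulant-generating function of the exponential family with natural statistic $\tilde{c}$ and base density $q^\alpha$; its normalized density is exactly $\tilde{q}_{\alpha,\theta}$ of \eqref{eq:tilde_q}. The standard differentiation-under-the-integral identities then give the gradient $\nabla_\theta \rho(\theta) = \E_{\tilde{q}_{\alpha,\theta}}[\tilde{c}] = (1-\alpha)\,\eta_\alpha(\theta)$ (which also recovers the $\eta_\alpha$ of \eqref{eq:c_hat_alpha}, since $\E_{\tilde{q}_{\alpha,\theta}}[c] = \eta_\alpha(\theta)$), and a second differentiation gives the covariance $\nabla^2_\theta \rho(\theta) = (1-\alpha)^2 \int_\mathcal{D}(c-\eta_\alpha(\theta))(c-\eta_\alpha(\theta))^\top \tilde{q}_{\alpha,\theta}\,dx$.

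Combining the two pieces, the factor $\frac{1}{1-\alpha}$ cancels one power of $(1-\alpha)$, leaving
\[
  \pdv[2]{D_\alpha(q\parallel p_\theta)}{\theta}
  = g(\theta) - (1-\alpha)\int_\mathcal{D}(c-\eta_\alpha(\theta))(c-\eta_\alpha(\theta))^\top \tilde{q}_{\alpha,\theta}\,dx.
\]
Since $g(\theta) = \pdv[2]{\psi(\theta)}{\theta}$, this Hessian is positive definite precisely when \eqref{eq:positive_definite_condition} holds, which proves the claim.

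The individual differentiations are routine term-by-term manipulations of log-partition functions, so I do not expect any single algebraic step to be hard. The one point that requires care is justifying the interchange of differentiation and integration when computing $\nabla_\theta\rho$ and $\nabla^2_\theta\rho$; this is where the standing assumption that $q$ and $p_\theta$ share the common support $\mathcal{D}$, together with the regularity of $\text{EM}(c)$ (so that all moments of the natural statistics exist, by Theorem \ref{thm:kass_exponential_family_expectation}), is needed to guarantee that the integrals defining $\rho$, $\nabla_\theta\rho$, and $\nabla^2_\theta\rho$ are finite and that the interchange is valid.
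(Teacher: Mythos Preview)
Your proposal is correct and follows essentially the same route as the paper: both write $D_\alpha(q\parallel p_\theta)=\psi(\theta)-\tfrac{1}{1-\alpha}\rho(\theta)$, differentiate $\rho$ as a cumulant-generating function to obtain $\tfrac{1}{1-\alpha}\nabla_\theta\rho=\eta_\alpha$ and $\tfrac{1}{1-\alpha}\nabla^2_\theta\rho=(1-\alpha)\int(c-\eta_\alpha)(c-\eta_\alpha)^\top\tilde{q}_{\alpha,\theta}\,dx$, and conclude from \eqref{eq:positive_definite_condition}. Your added remarks on justifying differentiation under the integral are a reasonable elaboration but not a departure from the paper's argument.
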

\begin{proof}
    We can write 
    \begin{equation*}
      \begin{split}
        D_\alpha(q \parallel p_\theta) &= \frac{1}{\alpha-1} \log \int_{\mathcal{D}} q^\alpha p_\theta^{1-\alpha} \, dx\\
        &= \dfrac{1}{\alpha-1} \log  [\exp((\alpha-1)\psi(\theta)) \int_{\mathcal{D}} q^\alpha \exp((1-\alpha)c^\top \theta) \, dx ]\\
        &= \psi(\theta) - \frac{1}{1-\alpha} \rho(\theta).
      \end{split}
    \end{equation*}
    A straightforward calculation shows that $\frac{1}{1-\alpha}\pdv{\rho(\theta)}{\theta} = \eta_\alpha(\theta)$ and
    \begin{equation*}
      \begin{split}
        \frac{1}{1-\alpha}\pdv[2]{\rho (\theta)}{\theta} 
        &= (1-\alpha)\int \left( c - \eta_\alpha(\theta) \right)\left( c - \eta_\alpha(\theta) \right)^\top \tilde{q}_{\alpha,\theta} \, dx.
      \end{split}
    \end{equation*}
    Hence, if \eqref{eq:positive_definite_condition} holds, the Hessian of $D_\alpha(q \parallel p_\theta)$ is positive definite.
\end{proof}

The following lemma shows the diffeomorphism between $\Theta$ and the set of all possible $\tilde{\eta}_\alpha(\theta)$, which is a subset of $\mathbb{R}^m$, when \eqref{eq:positive_definite_condition} is satisfied. This lemma is essential to prove the uniqueness of the minimizer of $D_\alpha(q \parallel p_\theta)$.

\begin{lem}\label{lem:Diffeomorphism_of_eta_tilde}
    Let $\tilde{\eta}_\alpha \coloneqq \eta(\theta) - \eta_\alpha(\theta)$. When \eqref{eq:positive_definite_condition} is satisfied, the mapping $\theta \;\mapsto\; \tilde{\eta}_\alpha(\theta)$ is a diffeomorphism from $\Theta$ to $\tilde{N}_\alpha \coloneqq \{\tilde{\eta}_\alpha(\theta) : \theta \in \Theta\}$.
\end{lem}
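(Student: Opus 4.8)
The plan is to recognize $\tilde{\eta}_\alpha$ as the Euclidean gradient of a strictly convex potential and then invoke the standard fact that the gradient of a strictly convex function is a diffeomorphism onto its image. Concretely, from the proof of Lemma \ref{lem:StrictlyConvex} we have $D_\alpha(q \parallel p_\theta) = \psi(\theta) - \frac{1}{1-\alpha}\rho(\theta)$ with $\partial_\theta \psi(\theta) = \eta(\theta)$ (Theorem \ref{thm:kass_exponential_family_expectation}) and $\frac{1}{1-\alpha}\partial_\theta \rho(\theta) = \eta_\alpha(\theta)$. Hence $\tilde{\eta}_\alpha(\theta) = \eta(\theta) - \eta_\alpha(\theta) = \partial_\theta D_\alpha(q \parallel p_\theta)$, so $\tilde{\eta}_\alpha$ is the gradient of $F(\theta) \coloneqq D_\alpha(q \parallel p_\theta)$, and its Jacobian is precisely the Hessian $\partial_\theta^2 F$. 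By Lemma \ref{lem:StrictlyConvex} together with the hypothesis \eqref{eq:positive_definite_condition}, this Hessian is positive definite at every $\theta \in \Theta$; moreover $F$ is smooth (the smoothness of $\psi$, and hence of $\rho$, follows from Theorem \ref{thm:kass_exponential_family_expectation} and the shared-support integrability assumption) and strictly convex on the open convex set $\Theta$ (convexity of $\Theta$ is also part of Theorem \ref{thm:kass_exponential_family_expectation}).

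With this identification, I would establish the defining properties of a diffeomorphism in turn. First, local invertibility: since the Jacobian $D\tilde{\eta}_\alpha(\theta) = \partial_\theta^2 F(\theta)$ is positive definite, hence nonsingular, everywhere, the inverse function theorem makes $\tilde{\eta}_\alpha$ a local diffeomorphism at every point; in particular it is an open map, so $\tilde{N}_\alpha = \tilde{\eta}_\alpha(\Theta)$ is an open subset of $\mathbb{R}^m$ and therefore a genuine $m$-dimensional target manifold. Second, global injectivity: for $\theta_1 \neq \theta_2$ in $\Theta$, convexity of $\Theta$ lets me integrate along the segment joining them to obtain
\begin{equation*}
  \inner{\tilde{\eta}_\alpha(\theta_1) - \tilde{\eta}_\alpha(\theta_2),\, \theta_1 - \theta_2} = \int_0^1 (\theta_1-\theta_2)^\top \partial_\theta^2 F(\theta_2 + t(\theta_1-\theta_2))(\theta_1-\theta_2)\, dt > 0,
\end{equation*}
where the strict positivity is exactly the positive definiteness of the Hessian; this strict monotonicity forces $\tilde{\eta}_\alpha(\theta_1) \neq \tilde{\eta}_\alpha(\theta_2)$.

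Third, smoothness of the inverse: $\tilde{\eta}_\alpha : \Theta \to \tilde{N}_\alpha$ is now a continuous open bijection, hence a homeomorphism, and since it is a local diffeomorphism at every point, the local inverses furnished by the inverse function theorem are smooth; patching them along the bijection yields a globally smooth inverse, so $\tilde{\eta}_\alpha$ is a diffeomorphism onto $\tilde{N}_\alpha$.

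I expect the only genuinely nontrivial step to be the global injectivity, since local invertibility and openness are immediate from the inverse function theorem and smoothness of the inverse is then automatic; the monotonicity inequality is what upgrades the local statement to a global one, and it is precisely here that the convexity of $\Theta$ and the positive-definiteness hypothesis \eqref{eq:positive_definite_condition} enter in an essential way. A secondary technical point to verify is the justification for differentiating $\rho$ under the integral sign to obtain $\eta_\alpha$ and the Hessian formula of Lemma \ref{lem:StrictlyConvex}, which relies on the shared-support assumption and the finiteness of the relevant moments guaranteed by Theorem \ref{thm:kass_exponential_family_expectation}.
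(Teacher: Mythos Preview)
Your proposal is correct and follows essentially the same approach as the paper: both identify $\tilde{\eta}_\alpha$ as the Euclidean gradient of $D_\alpha(q\parallel p_\theta)$, use the positive-definite Hessian (Lemma~\ref{lem:StrictlyConvex}) on the convex set $\Theta$ to obtain the strict monotonicity inequality $(\theta_1-\theta_2)^\top(\tilde{\eta}_\alpha(\theta_1)-\tilde{\eta}_\alpha(\theta_2))>0$ for $\theta_1\neq\theta_2$, and then appeal to the inverse function theorem for the smoothness of the inverse. The only cosmetic difference is that the paper establishes the monotonicity via the one-variable function $K(\lambda)=D_\alpha(q\parallel p_{\lambda\theta_1+(1-\lambda)\theta_2})$ and the strict increase of $K'(\lambda)$, whereas you integrate the Hessian quadratic form directly along the segment---these are the same computation.
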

\begin{proof}
    We will use a technique similar to that used in the proof of Theorem 2.2.3 \cite{kass1997}. First, we claim that for any $\theta_1, \theta_2 \in \Theta$,
    \begin{equation}
      (\theta_1-\theta_2)^\top  (\tilde{\eta}_\alpha(\theta_1) - \tilde{\eta}_\alpha(\theta_2) ) \ge 0,
      \label{eq:inequality_in_eta_tilde}
    \end{equation}
    with equality only when $\theta_1 = \theta_2$. By \eqref{eq:positive_definite_condition}, $\pdv[2]{D_\alpha(q \parallel p_\theta)}{\theta}$ is positive definite. Define $K(\lambda) = D_\alpha(q \parallel p_{\lambda \theta_1 + (1-\lambda)\theta_2} )$. Then
    \begin{align*}
      \frac{dK}{d\lambda}
      =& (\theta_1-\theta_2)^\top \tilde{\eta}_\alpha(\lambda \theta_1 + (1-\lambda)\theta_2),
      \\
      \frac{d^2K}{d\lambda^2}
      =& (\theta_1-\theta_2)^\top \dfrac{\partial^2 D_\alpha(q \parallel p_\theta)}{\partial \theta^2} (\theta_1-\theta_2).
    \end{align*}
    Hence $\tfrac{dK}{d\lambda}$ is monotonically increasing in $\lambda$ unless $\theta_1=\theta_2$. Therefore,
    \begin{equation}
      (\theta_1-\theta_2)^\top \tilde{\eta}_\alpha(\theta_2)
      = \left.\frac{dK}{d\lambda}\right|_{\lambda=0}
      < \left.\frac{dK}{d\lambda}\right|_{\lambda=1}
      = (\theta_1-\theta_2)^\top \tilde{\eta}_\alpha(\theta_1),
    \end{equation}
    which implies \eqref{eq:inequality_in_eta_tilde}. If $\tilde{\eta}_\alpha(\theta_1)=\tilde{\eta}_\alpha(\theta_2)$, then $\theta_1=\theta_2$, so the mapping is one-to-one. Smoothness follows from Theorem 2.2.1 of \cite{kass1997}, since $\rho(\theta)$ is strictly convex with derivatives of all orders. Moreover, by the inverse function theorem, the inverse mapping is also smooth.
\end{proof}

  Using Lemma \ref{lem:StrictlyConvex} and Lemma \ref{lem:Diffeomorphism_of_eta_tilde}, we state the following final result regarding the uniqueness of the minimizer of $D_\alpha(q \parallel p_\theta)$.
\begin{prop}\label{prp:minimizer}
  Suppose for $\theta \in \Theta$, $p_\theta$ and $q$ have the same support $\mathcal{D}$, and $p_\theta \in \mathrm{EM}(c)$ is a minimal exponential family. Let \eqref{eq:positive_definite_condition} be satisfied. Then, if the vanishing gradient set $\Theta_{\alpha,\ast}$ is non-empty, it contains only one element, $\theta_\ast$. The corresponding square-root density $\sqrt{p_{\theta_\ast}}$ is a unique global minimizer of $\omega_\alpha(\sqrt{p_\theta})$ for $\alpha \in (0,1)$, defined in Proposition \ref{prp:grad_omega_alpha}.
\end{prop}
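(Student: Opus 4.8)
The plan is to transfer the problem from the Riemannian manifold $\mathrm{EM}(c)^{\frac{1}{2}}$ to the flat parameter domain $\Theta$, where the two preceding lemmas do the heavy lifting. Since the chart $\phi\colon \sqrt{p_\theta}\mapsto\theta$ is a bijection and $\omega_\alpha(\sqrt{p_\theta}) = \tilde{\omega}_\alpha(\theta) \coloneqq D_\alpha(q\parallel p_\theta)$, minimizing $\omega_\alpha$ over the manifold is the same as minimizing the function $\tilde{\omega}_\alpha$ over $\Theta$; the location of a minimizer is a coordinate-free notion, so no geometric subtlety is introduced by passing to coordinates. First I would record that, by Theorem \ref{thm:kass_exponential_family_expectation}, $\Theta$ is convex, and that by Lemma \ref{lem:StrictlyConvex} the hypothesis \eqref{eq:positive_definite_condition} makes the Hessian $\partial^2\tilde{\omega}_\alpha/\partial\theta^2$ positive definite on all of $\Theta$. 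Hence $\tilde{\omega}_\alpha$ is strictly convex.

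Next I would identify the critical points. From the computation inside the proof of Lemma \ref{lem:StrictlyConvex}, the Euclidean gradient of $\tilde{\omega}_\alpha$ is exactly $\tilde{\eta}_\alpha(\theta)=\eta(\theta)-\eta_\alpha(\theta)$, which vanishes precisely on the set $\Theta_{\alpha,\ast}$ defined in \eqref{eq:VanishingGradientSet}. This matches the Riemannian picture: by Proposition \ref{prp:grad_omega_alpha} and positive-definiteness of $g(\theta)$, the Riemannian gradient $\mathrm{grad}\,\omega_\alpha$ vanishes at $\sqrt{p_\theta}$ if and only if $\tilde{\eta}_\alpha(\theta)=0$, so the two notions of critical point coincide and both are described by $\Theta_{\alpha,\ast}$. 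The small point to keep straight here is that convexity lives in the Euclidean coordinate structure while $\Theta_{\alpha,\ast}$ is phrased via the Riemannian gradient; verifying that they agree is immediate from the invertibility of $g$.

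For uniqueness of the element of $\Theta_{\alpha,\ast}$, I would invoke Lemma \ref{lem:Diffeomorphism_of_eta_tilde}: under \eqref{eq:positive_definite_condition} the map $\theta\mapsto\tilde{\eta}_\alpha(\theta)$ is a diffeomorphism, hence injective, so the equation $\tilde{\eta}_\alpha(\theta)=0$ has at most one solution; combined with the standing assumption that $\Theta_{\alpha,\ast}$ is non-empty, this forces $\Theta_{\alpha,\ast}=\{\theta_\ast\}$ for a single $\theta_\ast$ (equivalently, strict convexity already guarantees at most one stationary point). Finally, to upgrade ``critical point'' to ``unique global minimizer,'' I would use the first-order characterization of convexity: for any $\theta\in\Theta$, $\tilde{\omega}_\alpha(\theta)\ge \tilde{\omega}_\alpha(\theta_\ast) + \tilde{\eta}_\alpha(\theta_\ast)^\top(\theta-\theta_\ast) = \tilde{\omega}_\alpha(\theta_\ast)$, with strict inequality for $\theta\neq\theta_\ast$ by strict convexity. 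Translating back through the chart, $\sqrt{p_{\theta_\ast}}$ is the unique global minimizer of $\omega_\alpha$.

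I do not anticipate a genuine obstacle: once Lemmas \ref{lem:StrictlyConvex} and \ref{lem:Diffeomorphism_of_eta_tilde} are available, the argument reduces to the standard fact that a strictly convex function possessing a stationary point has a unique global minimum. The only place demanding care is the bookkeeping between the Euclidean gradient and Hessian (in which the convexity argument is phrased) and the Riemannian gradient (in which $\Theta_{\alpha,\ast}$ is defined), together with the observation that the minimizer is chart-independent, so that the flat convexity argument legitimately controls the Riemannian optimization problem.
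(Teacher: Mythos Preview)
Your proposal is correct and reaches the same conclusion as the paper, but the route differs. The paper stays inside the Riemannian framework: it computes the Riemannian Hessian of $\omega_\alpha$ via the retraction $R_{\sqrt{p_{\theta_0}}}(tV)=\sqrt{p_{\theta_0+tv}}$, observes that $\langle \mathrm{Hess}\,\omega_\alpha V,V\rangle$ coincides with the Euclidean second derivative $v^\top \partial^2 D_\alpha/\partial\theta^2\, v$, and then invokes Boumal's results on geodesic convexity (Theorems~11.6 and~11.23, Propositions~6.3 and~6.5) to conclude that a second-order critical point is a global minimizer. You instead pass once through the chart, argue by ordinary strict convexity of $\tilde{\omega}_\alpha$ on the convex set $\Theta$, and pull the conclusion back. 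Your argument is shorter and avoids the Riemannian optimization machinery entirely; the paper's version has the virtue of keeping the discussion in the same geometric language as the rest of the section, and of making explicit that the retraction used earlier is what links the two Hessians. Substantively, though, both proofs hinge on the same two ingredients---Lemma~\ref{lem:StrictlyConvex} for positive-definiteness of the (Euclidean) Hessian and Lemma~\ref{lem:Diffeomorphism_of_eta_tilde} for uniqueness---so the difference is one of packaging rather than content.
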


\begin{proof}
First, Lemma \ref{lem:StrictlyConvex} shows that $D_\alpha(q \parallel p_\theta)$ is convex. Next, we show that $\omega_\alpha$ is geodesically convex by showing $\mathrm{Hess}\,\omega_\alpha(\sqrt{p_{\theta_0}})\succ 0$ for any $\theta_0 \in \Theta$; see Theorem 11.23 \cite{boumal2023}. Let $R_{\sqrt{p_{\theta_0}}}$ be a retraction on $\mathrm{EM}(c)^{\frac12}$, and let $V = \sum_{i=1}^m v^i \partial_i$. For $t\in\mathbb{R}$, define $\hat{\omega}_\alpha$ on $T_{\sqrt{p_{\theta_0}}}\mathrm{EM}(c)$ by $\hat{\omega}_\alpha(tv)=\omega_\alpha(R_{\sqrt{p_{\theta_0}}}(tV))$. Let $\theta(t)\coloneqq\theta_0+tv$. Then (see Proof of Proposition 6.3 of \cite{boumal2023}),
\begin{align*}
  &\langle \mathrm{Hess}\,\omega_\alpha(\sqrt{p_{\theta_0}})V,V\rangle
   = \langle \mathrm{Hess}\,\hat{\omega}_\alpha(0)v,v\rangle
   = \left.\frac{d^2}{dt^2}\hat{\omega}_\alpha(tv)\right|_{t=0} \\
  &= \frac{d^2}{dt^2}\left. D_\alpha(q\parallel p_{\theta(t)})\right|_{t=0}
   = v^\top\pdv[2]{D_\alpha(q\parallel p_\theta)}{\theta}v > 0.
\end{align*}

Since $\omega_\alpha$ is geodesically convex, any local minimizer of $\omega_\alpha$ will be the global minimizer; see Theorem 11.6 \cite{boumal2023}. By Propositions 6.3 and 6.5 of \cite{boumal2023}, $\sqrt{p_{\theta_\ast}}$ is a local minimizer of $\omega_\alpha(\sqrt{p_{\theta_\ast}})$ if it is a second-order critical point for $\omega_\alpha$. This necessitates that $\mathrm{grad}\,\omega_\alpha=0$ and the Riemannian Hessian of $\omega_\alpha$ is semi-positive definite at $\sqrt{p_{\theta_\ast}}$, i.e., $\mathrm{Hess}\,\omega_\alpha(\sqrt{p_{\theta_\ast}})\succcurlyeq0$. Since $\omega_\alpha$ is geodesically convex, $\mathrm{Hess}\,\omega_\alpha(\sqrt{p_{\theta_\ast}})\succcurlyeq0$. Given that $\mathrm{EM}(c)$ is minimal and, hence, by Theorem \ref{thm:kass_exponential_family_expectation}, $g(\theta)\succ0$, it follows from Proposition \ref{prp:grad_omega_alpha} that the condition $\tilde{\eta}(\theta)=0$ guarantees $\mathrm{grad}\,\omega_\alpha=0$. The uniqueness of $\sqrt{p}_{\theta_\ast}$ follows from Lemma \ref{lem:Diffeomorphism_of_eta_tilde}, since if $0\in\tilde{N}_\alpha$, then $\theta_\ast$ is unique, as is its mapping $\sqrt{p_{\theta_\ast}}$. This completes the proof.
\end{proof}

An immediate observation from Proposition \ref{prp:minimizer} is as follows. When $\alpha\to1$, the inequality \eqref{eq:positive_definite_condition} holds automatically, regardless of $\ell(\cdot,y_k)$. Therefore, the uniqueness of the minimum of $D_{KL}(q\parallel p_\theta)$ holds irrespective of $\ell(\cdot,y_k)$. From this angle, not only does Proposition \ref{prp:minimizer} generalize a previous result on the solution of minimization of KL divergence $D_{KL}(q\parallel p_\theta)$ via moment-matching, but it also asserts the uniqueness of the solution to $\eta(\theta)=\eta_\alpha(\theta)$ and that it is a global minimum; see, for example, \cite[Theorem 1]{herbrich2005}.

Lastly, as we mentioned earlier, in the numerical experiments below, we will opt to minimize $D_{\frac12}(q\parallel p_\theta)$. In Section \ref{sec:Numerical_Examples}, we demonstrate that choosing this divergence rather than $D_{KL}(q\parallel p_\theta)$ turns out to be beneficial, as it leads to an approximated posterior that has a smaller Hellinger distance to the posterior density.

  \section{Numerical Implementation}\label{sec:numerical_implementation}

To minimize the R\'enyi divergence, we use the Riemannian gradient descent algorithm. Although more complex optimization algorithms tailored for Riemannian geometry exist, we opt for Riemannian gradient descent in this context because it avoids the need for affine connections or geodesics from the Riemannian manifold; see \cite{boumal2023}. During the implementation, all expectations and the cumulant-generating function evaluations in \eqref{eq:Riemannian_Gradient_Flow_Constant_Rate} will be approximated. Specifically, to calculate the cumulant-generating function $\psi(\theta)$ and expectations with respect to $p_\theta$, we use the adaptive Gaussian-based bijection from \cite{emzir2023a} that maps quadrature nodes in the domain $\mathcal{D}_c \coloneqq (-1,1)^d$ to adaptively cover the high-density region of $p_\theta$. 

A brief description of the sparse-quadrature scheme used in this work is as follows. Let $d$-dimensional numerical quadrature with $N$ quadrature nodes be defined as follows:
\begin{equation}
  Q^{d}_N  [ \varphi  ] \coloneqq \int_{\mathcal{D}_c} \varphi(\tilde{x}) \, d\tilde{x} \approx \sum_{i=1}^N w_i \, \varphi(\tilde{x}_i),
  \label{eq:Numerical_integration}
\end{equation}
for a test function $\varphi: \mathcal{D}_c \to \mathbb{R}$, $\{\tilde{x}_i\}_{i=1}^N \in \mathcal{D}_c$ are the nodes, and $\{w_i\}_{i=1}^N$ are the weights of the quadrature rule. Since the approximated filtering density $p_{\theta_t}$ moves with time, the quadrature nodes $\{\tilde{x}_i\}_{i=1}^N$ need to be adaptively updated. To force the quadrature nodes to move to a region covering the high-density region of $p_{\theta_t}$, the adaptive bijection proposed in \cite{emzir2023a} is formulated as follows:
\begin{align}
  \beta_\xi(\tilde{x}) = \mu + \sqrt{2}\,T^{-1}\Lambda^{1/2}\,\erf^{-1}(\tilde{x}), \quad \tilde{x}\in \mathcal{D}_c, \label{eq:phi_xi_gaussian}
\end{align}
where $\erf^{-1}(\tilde{x}) = [\erf^{-1}(\tilde{x}_1),\ldots,\erf^{-1}(\tilde{x}_d)]^\top$ and $\erf^{-1}$ is the inverse of the error function. In \eqref{eq:phi_xi_gaussian}, the bijection’s parameters are given by $\xi = (\mu, \Sigma)$, where $\mu = \E_\theta[x]$, $\Sigma = \E_\theta[(x-\mu)(x-\mu)^\top]$, and $\Lambda$ and $T$ are obtained from the eigendecomposition of $\Sigma$, i.e., $\Sigma = T^{-1}\Lambda T$. 

It has been shown that for certain applications, the bijection \eqref{eq:phi_xi_gaussian} is more efficient in terms of accuracy per number of quadrature nodes compared to the sparse Gauss--Hermite quadrature (sGHQ) \cite{emzir2023a}. However, during our implementation, we noticed that using the bijection \eqref{eq:phi_xi_gaussian} requires a significantly higher computational cost than the sGHQ. Therefore, we modify the domain of the quadrature nodes to be $\mathbb{R}^d$ by replacing $\tilde{x}_i$ with $\tilde{y}_i := \erf^{-1}(\tilde{x}_i)$. Furthermore, we replace $T^{-1}\Lambda^{1/2}$ with $L = \mathrm{chol}(\Sigma)$ for efficiency, where $\mathrm{chol}(\Sigma)$ is the Cholesky factorization of $\Sigma$. The parameter of the bijection becomes $\xi=(\mu,L)$, and the bijection reads
\begin{align}
  \beta_\xi(\tilde{y}) = \mu + \sqrt{2}\,L\,\tilde{y}, \quad \tilde{y}\in \mathbb{R}^d. \label{eq:phi_xi_gaussian_modified}
\end{align}
The illustration of the bijection \eqref{eq:phi_xi_gaussian_modified} is shown in Figure \ref{fig:explaining_sparse_quadrature}.

\begin{figure}[!h]
  \centering
  \includegraphics[width=\linewidth]{./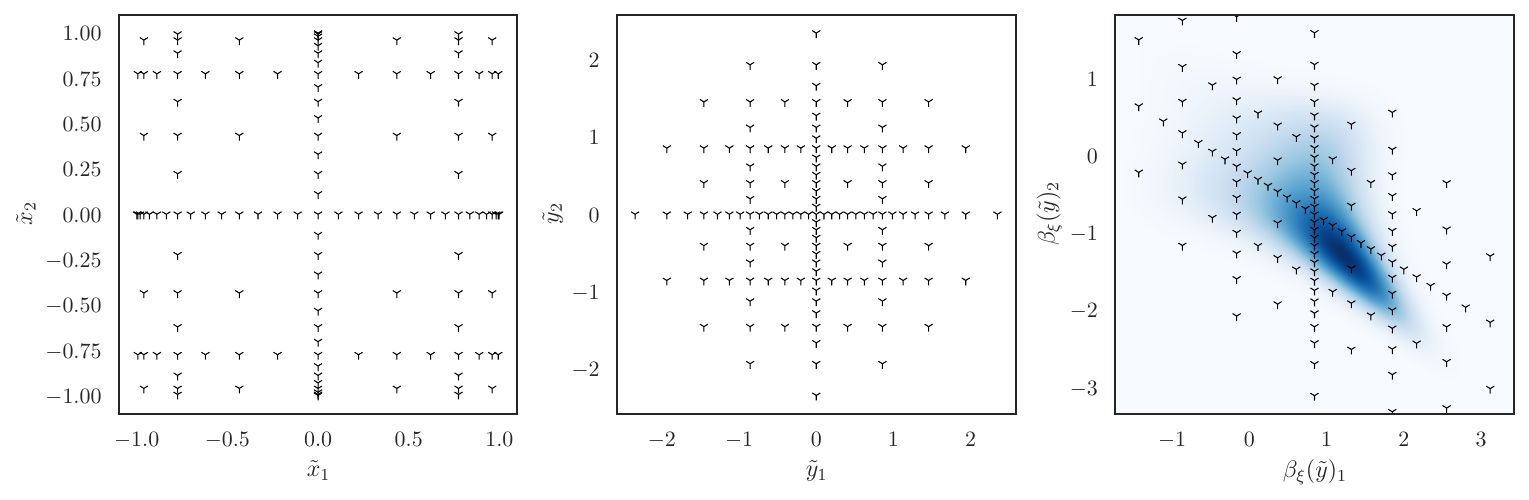}
  \caption{On the left is the scatter plot of the original fourth-level Gauss–Patterson quadrature nodes in $\mathcal{D}_c$ \cite{bungartz2004}. At the center is the scatter plot of $\tilde{y}_i = \erf^{-1}(\tilde{x}_i)$. The right plot shows the transformed quadrature nodes $\beta_\xi(\tilde{y}_i)$, where $\xi = (\mu,L)$ with $\mu = \E_\theta[x]$ and $L = \mathrm{chol}(\Sigma)$, and $\Sigma = \E_\theta[(x-\mu)(x-\mu)^\top]$ for some $p_\theta\in\mathrm{EM}(c)$. Notice how the quadrature nodes cover the high-density region of $p_\theta$.}
  \label{fig:explaining_sparse_quadrature}
\end{figure}

Using bijection \eqref{eq:phi_xi_gaussian_modified}, we define for a function $\varphi:\mathbb{R}^d\to\mathbb{R}$:
\begin{equation}
  Q^d_N  [\varphi ] \coloneqq \int_{\mathbb{R}^d} \varphi(\tilde{y}) \, d\tilde{y}
  \approx \sum_{i=1}^N w_{s,i}\,\varphi(\tilde{y}_i),
  \label{eq:Numerical_integration_modified}
\end{equation}
where $w_{s,i} =  (\tfrac{1}{2\sqrt{\pi}} )^d \exp(\|\tilde{y}_i\|^2)\,w_i$. The cumulant-generating function is approximated by
\begin{align}
  \psi^N(\theta) \coloneqq \log (Q^d_N[\exp(c^\top\beta_\xi(\tilde{y})\,\theta)\,2^{\frac{d}{2}}\det(L)] ).
  \label{eq:psi_N}
\end{align}
Furthermore, we approximate the expectation of any function $\varphi:\mathbb{R}^d\to\mathbb{R}$ with respect to $p_\theta$ by
\begin{equation}
  \E_\theta[\varphi]^{(N)} \coloneqq Q^d_N [\varphi(\beta_\xi(\tilde{y}))\exp(c^\top\beta_\xi(\tilde{y})\,\theta-\psi^N(\theta))\,2^{\frac{d}{2}}\det(L) ].
  \label{eq:Numerical_expectation}
\end{equation}

During the implementation, the nodes $\{ \tilde{y}_i\}_{i=1}^N$ are computed once and saved in memory, and used to calculate $\psi^N(\theta)$ and all approximated expectations via \eqref{eq:psi_N} and \eqref{eq:Numerical_expectation}, respectively. Using this sparse-grid quadrature setup, the single-step implementation of the Riemannian gradient descent is given in Algorithm \ref{alg:riemannian_gradient_descent}, while the overall single-step projection filter procedure appears in Algorithm \ref{alg:projectionfiler}. We select the natural statistics as $c = \{x^{\mathbf{i}}\}$, where $2 \leq |\mathbf{i}| \leq n_o$ and $\mathbf{i} \in \mathbb{N}_0^d$ is a multi-index.  Therefore, natural statistics vector elements are linearly independent and also include $x_i$, $x_i x_j$ for $i,j = 1, \dots, d$. Due to this inclusion, the bijection parameters can be calculated directly from the natural statistics expectations: there exist $T_\mu \in \mathbb{R}^{d \times m}$ and a linear map $\Phi_\Sigma: \mathbb{R}^{m} \to \mathbb{R}^{d \times d}$ such that $\mu = T_\mu \eta(\theta)$ and $\expvalb{\theta}{xx^\top} = \Phi_\Sigma(\eta(\theta))$. Given $\eta(\theta)$ the bijection parameters $\mu$ and $L$ can be computed as follows
\begin{gather}
  \begin{aligned}
  \mu  =& T_\mu \eta, &  \Sigma =& \Phi_\Sigma(\eta) - \mu \mu^\top, &  L =& \mathrm{chol}(\Sigma).
\end{aligned} \label{eq:parameters_update}
\end{gather}  

The Fisher metric inverse $g^{-1}$ in Algorithm \ref{alg:riemannian_gradient_descent} is guaranteed to exist by Theorem \ref{thm:kass_exponential_family_expectation} since the exponential family is minimal. However, the positive definiteness of $g$ might be violated in practice due to its approximation via the sparse-grid quadrature. In this case, a higher sparse-quadrature level or an adaptive Tikhonov regularization can be used to enforce the positive definiteness of $g$.

\begin{algorithm}
  \caption{Single-Step Riemannian Gradient Descent}\label{alg:riemannian_gradient_descent}
  \begin{algorithmic}[1]
  \Procedure{RiemannianGradientDescent}{$\theta,\xi,\theta_k^-,y_k,\delta$}
    \State $\eta_\alpha - \eta \gets \pdv{D_{\frac{1}{2}}(q\parallel p_\theta)}{\theta}  [\theta,\xi,\theta_k^-,y_k]$ \Comment{Automatic Differentiation of \eqref{eq:D_1_2_explicit}}
    \State $g\gets \pdv[2]{\psi^N(\theta)}{\theta}$   \Comment{Automatic Differentiation of \eqref{eq:psi_N}}
    \State $\theta \gets \theta - 4\delta\,g^{-1}(\eta-\eta_\alpha)\,dt$ \Comment{\eqref{eq:Riemannian_Gradient_Flow_Constant_Rate}}
    \State $\eta \gets \pdv{\psi^N(\theta)}{\theta}$   \Comment{Automatic Differentiation of \eqref{eq:psi_N}}
    \State $\xi \gets$ \Call{NewBijectionParams}{$\eta$} \Comment{ Update bijection parameter using \eqref{eq:parameters_update}}
    \State \textbf{return} $\theta,\xi$
  \EndProcedure
  \end{algorithmic}
\end{algorithm}

\begin{algorithm}
  \caption{Single-Step Projection Filter Using Parametric Bijection}\label{alg:projectionfiler}
  \begin{algorithmic}[1]
    \Procedure{ProjectionFilter}{$\theta_{k-1},\xi_{k-1},y_k,N_T,\delta$}
      \State $\theta_k^-,\xi_k^- \gets$ \Call{PredictiveUpdate}{$\theta_{k-1},\xi_{k-1}$} \Comment{Propagating ODE \eqref{eq:projected_Fokker_Planck} from $t = (k-1)\Delta t$ to $t = k \Delta t$}
      \State $\theta_k,\xi_k \gets \theta_k^-,\xi_k^-$
      \For{$j = 1,\dots,N_T$}
        \State $\theta_k,\xi_k \gets$ \Call{RiemannianGradientDescent}{$\theta_k,\xi_k,\theta_k^-,y_k,\delta$}
      \EndFor
      \State \textbf{return} $\theta_k,\xi_k$
    \EndProcedure
  \end{algorithmic}
\end{algorithm}

\section{Numerical Examples}\label{sec:Numerical_Examples}
To demonstrate the effectiveness of the proposed method, we apply this technique to two Bayesian update problems. We utilize the numerical package for the projection filter available from \url{https://github.com/puat133/Correlated_Noise_Projection_Filter}. For the sparse-grid quadrature, we employ the Gauss--Patterson sparse grid \cite{bungartz2004} with level 6, which corresponds to 769 nodes, modifying the quadrature nodes and weights according to \eqref{eq:phi_xi_gaussian_modified} and \eqref{eq:Numerical_integration_modified}, respectively. We also compute the approximated posteriors obtained by minimizing $D_{KL}(q \parallel p_\theta)$. In the following numerical examples, we calculate the posterior density $q$ and expectations with respect to it using the same sparse-grid settings. Specifically, we expand the exponential family to $\text{EM}(\tilde{c})$, adding $c_{m+1} = \ell(\cdot,y_k)$ as the $(m+1)$-th natural statistic, i.e., $\tilde{c} = [c_1, \ldots, c_m, \ell(\cdot,y_k)]^\top$. Consequently, the natural parameters of $q$ are defined as $\tilde{\theta} = [\theta_{k,1}^-, \ldots, \theta_{k,m}^-, -1]^\top$; see Proposition 5.2 of \cite{emzir2023a}. In this section, our focus is solely on the Bayesian update part. Therefore, it is assumed that the state $x_t$ is two-dimensional, and that after the predictive update via the projection of the square root of the Fokker--Planck equation given by \eqref{eq:projected_Fokker_Planck}, the parametric density is given by $p_{\theta_k^-} = p_{\theta_0} = \mathcal{N}(\mu, \Sigma)$ for some $\mu \in \mathbb{R}^2, \Sigma \in \mathbb{R}^{2\times 2}$.

\subsection{A Multimodal Two-Dimensional Case}\label{sec:example_1}
For the first numerical example, we choose $\ell(x_k,y_k) = 0.5 \norm{\frac{\sin(x_k-y_k)}{\sigma_y}}^2$, with $\sigma_y=\frac{1}{2}$, and the predictive density is set to $p_{\theta_k^-} \coloneqq p_{\theta_0} = \mathcal{N}\left([1,1]^\top,I \right)$. The negative log-likelihood $\ell(\cdot,y_k)$ is highly nonlinear, and the corresponding posterior $q$ is multimodal. We choose the maximum order of monomials in the natural statistics $c$ to be four, $n_o=4$, and $y=[0,0]^\top$, and $dt=1.25 \times 10^{-2}$ for $N_t=400$ iterations. Figure \ref{fig:density_comparison_particle_simple_4_EXNO_1} shows that at the end of the simulation time $T=N_t dt$, the approximated posterior $p_{\theta_T}$ obtained by minimizing $D_{\frac{1}{2}}(q \parallel p_\theta)$ resulted in a closer resemblance compared to $p_{\theta_T}^{KL}$, the one obtained by minimizing $D_{KL}(q \parallel p_\theta)$. The $p_{\theta_T}^{KL}$ is wider and has a lower peak compared to $p_{\theta_T}$. Moreover, the two minor modes on the top and the right of the major mode are less separated in $p_{\theta_T}^{KL}$. We can also see from Figure \ref{fig:Hell_KL_4_EXNO_1} that $p_{\theta_T}$ has a substantially lower Hellinger distance compared to $p_{\theta_T}^{KL}$ ($H(q, p_{\theta_T})=1.066 \times 10^{-1}$ and $H(q, p_{\theta_T}^{KL})=1.296 \times 10^{-1}$). As expected, the Riemannian gradient descent method that minimizes $D_{\frac{1}{2}}(q\parallel p_\theta)$ produces a posterior approximate with $D_{KL}(q\parallel p_{\theta_T})$ ($1.487 \times 10^{-1}$) higher than those of $D_{KL}(q\parallel p_{\theta})$ minimization ($1.125\times 10^{-1}$). Nonetheless, this highlights the benefit of minimizing $D_{\frac{1}{2}}(q \parallel p_\theta)$ rather than $D_{KL}(q \parallel p_\theta)$. Increasing $n_o$ to $6$ for both approximations produces approximated posterior densities where the gap between the two Hellinger distances becomes smaller ($H(q, p_{\theta_T})=7.951 \times 10^{-2}$ and $H(q, p_{\theta_T}^{KL})=8.570 \times 10^{-2}$). We also report that applying the ordinary gradient descent (that is using the Euclidean gradient, rather than the Riemannian gradient) to this example resulted in a numerical failure. This clearly shows the merit of using the Riemannian gradient descent method.

\begin{figure}[h]
  \centering
  \includegraphics[trim={0.8cm 3.0cm 1.0cm 3.0cm},clip,width=\linewidth]{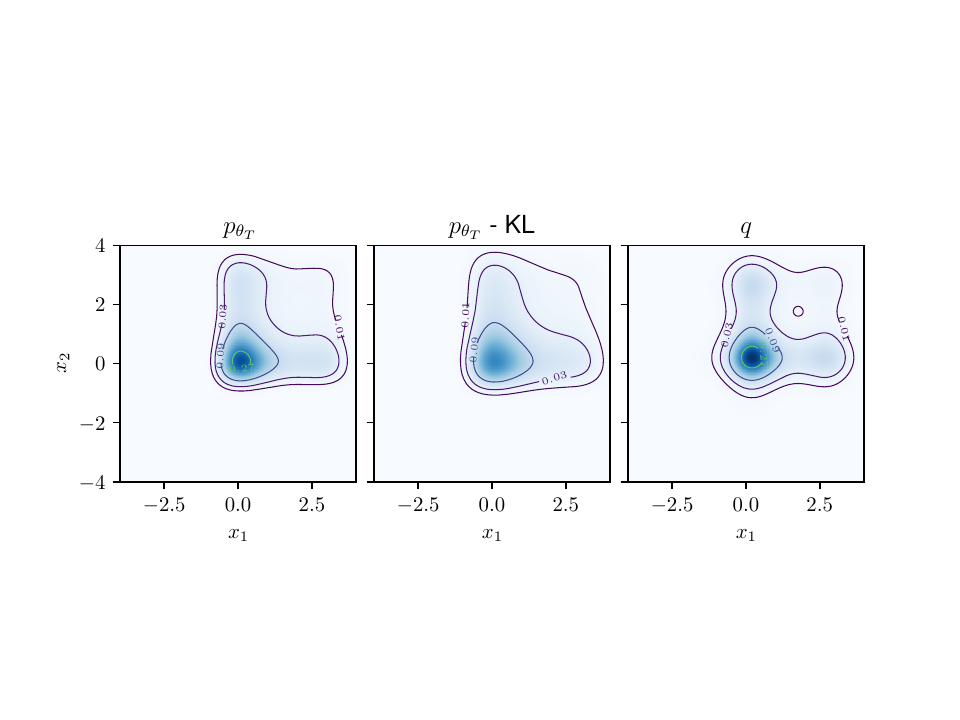}
  \caption{Comparison of the approximated posteriors for the numerical example in Section \ref{sec:example_1}. The approximated posterior on the left is obtained by minimizing $D_{\frac{1}{2}}(q\parallel p_\theta)$, the one in the center by minimizing $D_{KL}(q\parallel p_\theta)$. The actual posterior density $q$ is shown on the right.}
  \label{fig:density_comparison_particle_simple_4_EXNO_1}
\end{figure}

\begin{figure}[!h]
  \centering
  \includegraphics[trim={0.25cm 3.25cm 0.0cm 4.25cm},clip,width=\linewidth]{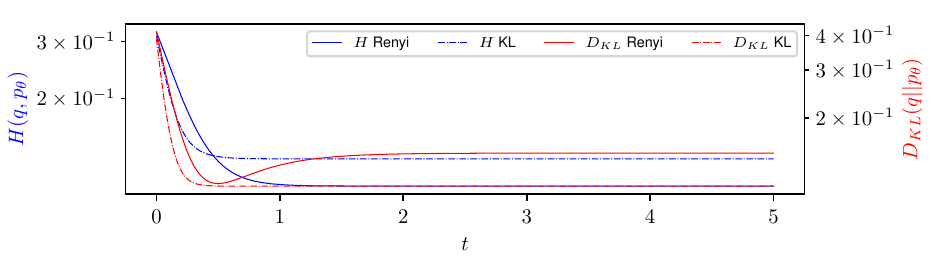}
  \caption{Evolution of Hellinger distances and KL-divergence for the numerical example in Section \ref{sec:example_1}. The straight lines correspond to $H(p,q)$ (blue) and $D_{KL}(q\parallel p_\theta)$ (red), respectively, where the approximated posterior is solved by minimizing $D_{\frac{1}{2}}$, while the dashed lines correspond to those quantities where the approximated posterior is solved by minimizing $D_{KL}$.}
  \label{fig:Hell_KL_4_EXNO_1}
\end{figure}

\subsection{Two-Dimensional Tracking Problem}\label{sec:example_2}
For the second example, we apply this method to a case where $\ell(x_k,y_k) = \frac{1}{2} (y_k-h(x_k))^\top R^{-1}(y_k-h(x_k))$, with:
  \begin{equation}
    h(x) = \mqty[\sqrt{x_1^2+x_2^2+z_0^2},&
                  \tan^{-1}(\frac{x_1}{x_2}),&
                \tan^{-1}(\frac{z_0}{\norm{x}})]^\top.  
  \end{equation}
  The function $h$ is a commonly used measurement function for target tracking problems where the first measurement is the distance, and the last two measurements are the azimuth and elevation angles. We test the case where $z_0=0.2$, $R = \text{diag}([2 \times 10^{-2},4 \times 10^{-1},4 \times 10^{-1}])$. Here, we set $p_{\theta_0}=\mathcal{N}(\mu,\Sigma)$, where $\mu=[\frac{1}{2},-\frac{1}{2}]^\top, \Sigma = 5 \times 10^{-2} I$, and $y=h(\mu)$. For this example, we choose $n_o=2$, and $dt=5 \times 10^{-2}$ for $N_t=100$ iterations. The results of this example can be seen in Figures \ref{fig:density_comparison_particle_simple_2_EXNO_0} and \ref{fig:Hell_KL_2_EXNO_0}. Using similar legends as in Section \ref{sec:example_1}, Figure \ref{fig:density_comparison_particle_simple_2_EXNO_0} shows that the approximated posterior $p_{\theta_T}$ has a closer resemblance compared to $p_{\theta_T}^{KL}$, which tends to be wider and has a lower peak compared to the former. As for the Hellinger distance, we can see from Figure \ref{fig:Hell_KL_2_EXNO_0} that $p_{\theta_T}$ has a substantially lower Hellinger distance compared to $p_{\theta_T}^{KL}$. Again, this emphasizes the benefit of minimizing $D_{\frac{1}{2}}(q \parallel p_\theta)$ rather than $D_{KL}(q \parallel p_\theta)$. Note, however, as the posterior $q$ is only single-mode, when we increase $n_o$ to $4$, both $p_{\theta_T}$ and $p_{\theta_T}^{KL}$ are almost indistinguishable. If we rather use the Euclidean gradient descent to optimize $D_{\frac{1}{2}}$ or KL-divergence, the decreases are given in Figure \ref{fig:Hell_KL_Euclidean_2_EXNO_0}. The declines are significantly smaller compared to the ones in Figure \ref{fig:Hell_KL_2_EXNO_0}.

\begin{figure}[h]
  \centering
  \includegraphics[trim={0.7cm 3.0cm 1.0cm 3.0cm},clip,width=\linewidth]{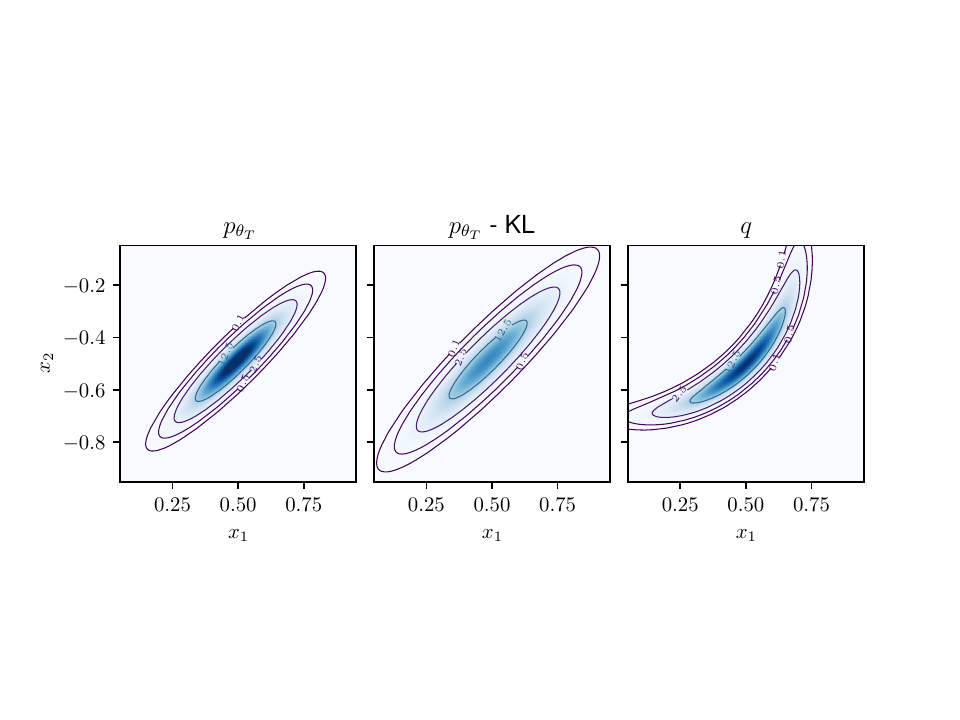}
  \caption{Comparison of the approximated posteriors with $n_o=2$ for the numerical example in Section \ref{sec:example_2}. The legend is similar to Figure \ref{fig:density_comparison_particle_simple_4_EXNO_1}.}
  \label{fig:density_comparison_particle_simple_2_EXNO_0}
\end{figure}

\begin{figure}[!h]
  \centering
  \includegraphics[trim={0.25cm 3.25cm 0.0cm 4.25cm},clip,width=\linewidth]{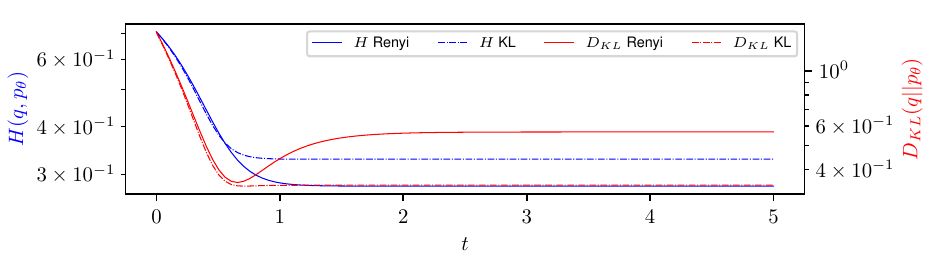}
  \caption{Evolution of Hellinger distances and KL-divergence for the numerical example in Section \ref{sec:example_2}. The legend is similar to Figure \ref{fig:Hell_KL_4_EXNO_1}.}
  \label{fig:Hell_KL_2_EXNO_0}
\end{figure}

\begin{figure}[!h]
  \centering
  \includegraphics[width=\linewidth]{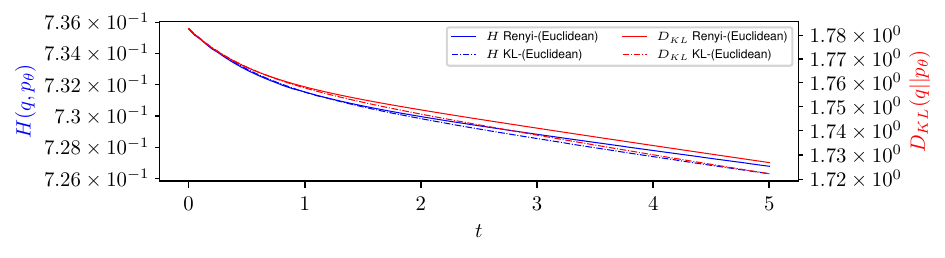}
  \caption{Evolution of Hellinger distances and KL-divergence for the numerical example in Section \ref{sec:example_2}, solved using the Euclidean gradient. The legend is similar to Figure \ref{fig:Hell_KL_4_EXNO_1}.}
  \label{fig:Hell_KL_Euclidean_2_EXNO_0}
\end{figure}

\subsection{Comparison Against Other Bayesian Update Approximation Methods}

  \begin{table}[h]
    \centering
    \scriptsize
    \begin{tabular}{|l|r|r|r|r|}
      \hline
      \textbf{Method} & \textbf{iter.} & \textbf{Hell. Dist.} & \textbf{FLOP} & \textbf{Time(s)} \\
      \hline
      Unscented & 1 & 3.187$\times 10^{-1}$ & 3.510$\times 10^{2}$ & 9.310$\times 10^{-4}$ \\
      GH -order 17 & 1 & 3.207$\times 10^{-1}$ & 1.398$\times 10^{4}$ & 6.599$\times 10^{-4}$ \\
      R-0.5 order 2 - Euler & 50 & 3.083$\times 10^{-1}$ & 5.700$\times 10^{5}$ & 9.012$\times 10^{-3}$ \\
      R-0.5 order 4 - Euler & 100 & 1.080$\times 10^{-1}$ & 1.947$\times 10^{6}$ & 5.953$\times 10^{-2}$ \\
      R-0.5 order 8 - Tsit5 & 50 & 2.491$\times 10^{-2}$ & 3.182$\times 10^{7}$ & 1.720$\times 10^{-1}$  \\
      KL order 2 - Euler & 50 & 3.094$\times 10^{-1}$ & 5.154$\times 10^{5}$ & 5.604$\times 10^{-3}$  \\
      KL order 4 - Euler & 100 & 1.302$\times 10^{-1}$ & 1.835$\times 10^{6}$ & 3.223$\times 10^{-2}$  \\
      KL order 8 - Tsit5 & 50 & 3.173$\times 10^{-2}$ & 3.118$\times 10^{7}$ & 9.785$\times 10^{0}$  \\
      Particle 4.8$\times 10^{4}$ smpls & 1 & 2.414$\times 10^{-1}$ & 2.449$\times 10^{6}$ & 1.266$\times 10^{-2}$  \\
      Particle 4.8$\times 10^{5}$ smpls & 1 & 8.271$\times 10^{-2}$ & 2.448$\times 10^{7}$ & 2.835$\times 10^{-2}$  \\
      Particle 4.8$\times 10^{6}$ smpls & 1 & 2.931$\times 10^{-2}$ & 2.448$\times 10^{8}$ & 1.910$\times 10^{-1}$  \\
      Particle 4.8$\times 10^{7}$ smpls & 1 & 1.250$\times 10^{-2}$ & 2.448$\times 10^{9}$ & 1.589$\times 10^{0}$  \\
      \hline
    \end{tabular}
    \caption{FLOP and execution time comparison for example Section VI.A.}
    \label{tab:example_VI_A}  
  \end{table}

  \begin{table}[h]
  \centering
  \scriptsize
  \begin{tabular}{|l|r|r|r|r|}
    \hline
    \textbf{Method} & \textbf{iter.} & \textbf{Hell. Dist.} & \textbf{FLOP} & \textbf{Time(s)} \\
    \hline
    Unscented & 1 & 4.488$\times 10^{-1}$ & 5.680$\times 10^{2}$ & 1.822$\times 10^{-3}$ \\
    GH -order 17 & 1 & 4.636$\times 10^{-1}$ & 2.385$\times 10^{4}$ & 1.091$\times 10^{-3}$ \\
    R-0.5 order 2 - Euler & 50 & 2.795$\times 10^{-1}$ & 5.860$\times 10^{5}$ & 9.742$\times 10^{-3}$ \\
    R-0.5 order 4 - Euler & 100 & 7.916$\times 10^{-2}$ & 1.963$\times 10^{6}$ & 5.442$\times 10^{-2}$ \\
    R-0.5 order 6 - Tsit5 & 50 & 2.011$\times 10^{-2}$ & 3.475$\times 10^{7}$ & 8.051$\times 10^{-1}$ \\
    KL order 2 - Euler & 50 & 3.162$\times 10^{-1}$ & 5.314$\times 10^{5}$ & 5.161$\times 10^{-3}$ \\
    KL order 4 - Euler & 100 & 2.577$\times 10^{-1}$ & 1.851$\times 10^{6}$ & 3.154$\times 10^{-2}$ \\
    KL order 6 - Tsit5 & 50 & 2.207$\times 10^{-2}$ & 3.367$\times 10^{7}$ & 1.609$\times 10^{0}$ \\
    Particle 4.8$\times 10^{4}$ smpls & 1 & 2.180$\times 10^{-1}$ & 3.457$\times 10^{6}$ & 1.261$\times 10^{-2}$ \\
    Particle 4.8$\times 10^{5}$ smpls & 1 & 6.365$\times 10^{-2}$ & 3.456$\times 10^{7}$ & 3.131$\times 10^{-2}$ \\
    Particle 4.8$\times 10^{6}$ smpls & 1 & 2.153$\times 10^{-2}$ & 3.456$\times 10^{8}$ & 2.525$\times 10^{-1}$ \\
    Particle 4.8$\times 10^{7}$ smpls & 1 & 8.644$\times 10^{-3}$ & 3.456$\times 10^{9}$ & 2.582$\times 10^{0}$\\
    \hline
  \end{tabular}
  \caption{FLOP and execution time comparison for example Section VI.B.}
  \label{tab:example_VI_B}
\end{table}

We compare the performance and computational cost of our method to those of the Bayesian update approximations done via two sigma-point methods, the unscented and Gauss-Hermite sigma points \cite{julier1997,ito2000,sarkka2013}, as well as the systematic resampling method for the particle filter \cite{chopin2020}. The ground truth posterior density $q$ was calculated using a Gauss--Kronrod sparse grid, with the sparse grid level 9. To calculate the Hellinger distance between $q$ and the results of the resampling method, we created a two-dimensional histogram on $500 \times 500$ grid points from the resampled particles. Then the Hellinger distance is calculated via numerical integration. For the sigma point methods, the posterior mean and covariance obtained from the sigma-point update method are used to get the corresponding natural parameters, which are then used to calculate the Hellinger distance.

We also complement our comparison with the numerical result from solving the Riemannian gradient flow using a higher-order solver (Tsitouras' 5/4 method \cite{tsitouras2011}) via the diffrax package \cite{kidger2022}. All numerical implementations are performed using JAX \cite{bradbury2018}. 
The floating point operations' counting was done via the \verb|cost_analysis| method from the \verb|jax.stages.Compiled| class. For these comparisons, we use a Gauss--Kronrod sparse grid for the numerical integration with varying levels of accuracy. The comparison results for examples in Sections VI.A and VI.B are given in Tables \ref{tab:example_VI_A} and \ref{tab:example_VI_B}, respectively.
      
From these tables, we can highlight a few things. Upon selecting the maximum order of polynomial $n_o$ equal to two (Gaussian family case), the approximated posterior obtained via the optimization of $\frac{1}{2}$-R\'enyi divergence outperforms the unscented and Gauss-Hermite posterior density approximations, and does so with a significant margin, as seen in Tables \ref{tab:example_VI_A} and \ref{tab:example_VI_B}. However, this comes with a significantly higher computational cost compared to the unscented and Gauss-Hermite transformations. Increasing $n_o$ to four greatly decreases the Hellinger distances in both examples, making the Hellinger distances comparable to those of the resampling method with $4.8 \times 10^5$ samples. Reaching this level of Hellinger distance is impossible with Gaussian approximation. In this case, the resampling method requires ten times the floating point operations compared to the $\frac{1}{2}$-R\'enyi divergence optimization. For the example in Section VI.A, we were able to produce the result with $n_o=8$, where the Hellinger distance to the posterior is $2.491 \times 10^{-2}$, which is smaller than that of the resampling method with $4.8 \times 10^6$ samples. This time, the resampling method requires about eight times the floating point operations compared to the $\frac{1}{2}$-R\'enyi divergence optimization. In general, our Bayesian update approximation method offers a unique balance between the rough Gaussian-based approximations like the sigma-point based methods, which are computationally cheap, and the particle-based approximations, where the approximation can be made as accurate as possible at a very high computational cost.

\section{Conclusions}\label{sec:Conclusions}
We have formulated the Bayesian update step of exponential family projection filters for continuous-discrete problems with non-conjugate priors via a Riemannian optimization procedure applied to $\frac{1}{2}$-R\'enyi divergence on the $\text{EM}(c)^{\frac{1}{2}}$ manifold. We chose this particular divergence order to ensure compatibility with the projection of the Fokker--Planck equation in the prediction step. We also proved that if a point $p \in \text{EM}(c)^{\frac{1}{2}}$ satisfies a certain moment-matching criterion, then it is the local minimum of $\alpha$-R\'enyi divergence. By implementing an Euler approximation to the Riemannian gradient flow, we show the effectiveness of this method against the standard Riemannian $D_{KL}$ optimization to approximate highly non-Gaussian posterior densities.

\section*{Appendix}
A retraction on a manifold $M$ is a smooth map $R: TM \to M$ defined by $(x, X) \mapsto R_x(X)$ such that for any point $x \in M$, each curve $\gamma(t)=R_x(tX)$ satisfies $\gamma(0)=x$ and $\dot{\gamma}(0)=X$ \cite{boumal2023}. To construct a retraction for $\text{EM}(c)^{\frac{1}{2}} \coloneqq \left\{\sqrt{p_\theta}: p_\theta \in \text{EM}(c)\right\}$, we can use the construction of a retraction from the local coordinate as stated in Section 4.1.3 of \cite{absil2008} as follows:
\begin{align}
  R_{\sqrt{p_\theta}}:& T_{\sqrt{p_\theta}}\text{EM}(c)^{\frac{1}{2}} \to \text{EM}(c)^{\frac{1}{2}} \nonumber\\
  & V \mapsto \pi (\sqrt{p_\theta})(\varrho_\ast V).  \label{eq:retraction}
\end{align}

In \eqref{eq:retraction}, $\pi (\sqrt{p_\theta}) : \mathbb{R}^m \to \text{EM}(c)^{\frac{1}{2}}$ is defined as $\pi (\sqrt{p_\theta})(v) = \varrho^{-1}(v + \varrho(\sqrt{p_\theta}))$. In particular, using local vector representation with $v \in \mathbb{R}^m$ such that $V = \sum_{i=1}^m v^i \partial_i$, \eqref{eq:retraction} is equal to:
\begin{equation}
  R_{\sqrt{p_\theta}} (V) = \sqrt{p_{\theta + v}} \label{eq:Retraction_R_sqrt_p}
\end{equation}
Using this equation, it is straightforward to show that a curve $\gamma(t) \coloneqq R_{\sqrt{p_\theta}}(tV)$ satisfies $\gamma(0)=\sqrt{p_\theta}$ and $\dot{\gamma}(0)=V$. 

Observe that given $\theta \in \Theta$, we need to ensure that $t$ is selected from an open interval $I$ containing $0$ such that $\theta + tv \in \Theta$ for any $t \in I$. In the following proposition, we show that the existence of such an interval is guaranteed for the case of regular exponential families. 
\begin{prop}
  Let $\text{EM}(c)$ be a regular exponential family. For any $\theta \in \Theta \subseteq \mathbb{R}^m$ and $V = \sum_{i=1}^m v^i \partial_i$, there exists an open interval $I$ containing $0$ such that the curve $\gamma(t) \coloneqq R_{\sqrt{p_\theta}}(t V) \in \text{EM}(c)^{\frac{1}{2}}$ for all $t \in I$.
\end{prop}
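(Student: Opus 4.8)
The plan is to reduce the statement to the topological openness of the natural parameter space $\Theta$, which holds by assumption because $\text{EM}(c)$ is regular. First I would recall from \eqref{eq:Retraction_R_sqrt_p} that in local coordinates the retraction is $R_{\sqrt{p_\theta}}(tV) = \sqrt{p_{\theta + tv}}$, where $v \in \mathbb{R}^m$ is the coordinate vector of $V$, i.e.\ $V = \sum_{i=1}^m v^i \partial_i$. Hence $\gamma(t) = \sqrt{p_{\theta + tv}}$ lies in $\text{EM}(c)^{\frac{1}{2}}$ if and only if $\theta + tv \in \Theta$. The problem therefore amounts to producing an open interval $I \ni 0$ on which the affine ray $t \mapsto \theta + tv$ stays inside $\Theta$.

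Next I would invoke regularity directly: as stated after \eqref{eq:cumulant-generating}, $\Theta$ is an open subset of $\mathbb{R}^m$. Since $\theta \in \Theta$ and $\Theta$ is open, there is an $r > 0$ with the open ball $B(\theta, r) \subset \Theta$. If $v = 0$ the curve is constant and one may take $I = \mathbb{R}$. Otherwise, whenever $\abs{t} < r/\norm{v}_2$ we have $\norm{(\theta + tv) - \theta}_2 = \abs{t}\,\norm{v}_2 < r$, so $\theta + tv \in B(\theta, r) \subset \Theta$. Thus $I = (-r/\norm{v}_2,\, r/\norm{v}_2)$ is an open interval containing $0$ with $\gamma(t) \in \text{EM}(c)^{\frac{1}{2}}$ for every $t \in I$, which is exactly the claim.

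I do not anticipate a substantive obstacle here, since the content of the statement is precisely that $\Theta$ is open: the only thing to verify is the elementary fact that a short enough segment of a line through an interior point remains in an open neighborhood. If I preferred to avoid fixing a norm, I would instead argue by continuity: the map $t \mapsto \theta + tv$ is continuous, so its preimage of the open set $\Theta$ is open in $\mathbb{R}$ and contains $0$, hence contains an open interval about $0$. Either route closes the argument, and one could optionally strengthen the conclusion by noting that convexity of $\Theta$ (Theorem \ref{thm:kass_exponential_family_expectation}) makes the set of admissible $t$ itself an interval, yielding a maximal such $I$.
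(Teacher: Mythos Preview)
Your proposal is correct and follows essentially the same approach as the paper: reduce to showing $\theta + tv \in \Theta$ for small $t$, then invoke the openness of $\Theta$ (regularity) to find a neighborhood of $\theta$ inside $\Theta$ whose intersection with the line $\{\theta + tv\}$ yields the desired interval. Your version is slightly more explicit (you handle $v=0$ separately and give a concrete radius via a ball), but the argument is the same.
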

\proof{
Consider the line $\ell = \left\{ \theta + t v : t \in (-\infty, \infty) \right\} \subset \mathbb{R}^m$. By \eqref{eq:Retraction_R_sqrt_p}, $\gamma(t)=\sqrt{p_{\theta+t v}}$. Since $\text{EM}(c)$ is a regular exponential family, $\Theta$ is an open subset of $\mathbb{R}^m$. Therefore, there exists an open convex neighborhood $U$ of $\theta \in \Theta$ such that the line section $\tilde{\ell} = U \cap \ell \subset \Theta$ is non-empty. Thus, the existence of the interval $I$ follows immediately. $\square$
}

\bibliographystyle{elsarticle-num} 
\bibliography{Zotero_BibTeX,additional_bibtex}
\end{document}